\theoremstyle{definition}
\newtheorem{thm}{Theorem}[section]
\newtheorem{lem}[thm]{Lemma}
\newtheorem{prp}[thm]{Proposition}
\newtheorem{dfn}[thm]{Definition}
\newtheorem{cor}[thm]{Corollary}
\newtheorem{rmk}[thm]{Remark}
\newtheorem{exa}[thm]{Example}
\newcommand{\Q}{{\mathbf{Q}}}
\newcommand{\R}{{\mathbf{R}}}
\newcommand{\C}{{\mathbf{C}}}
\newcommand{\N}{{\mathbf{N}}}
\begin{document}
\title[matrix monotone functions and matrix convex functions]
{Double piling structure of matrix monotone functions and of matrix convex functions II}

\author[H. Osaka]{HIROYUKI OSAKA$^{a}$} 
\date{13, Apr., 2011}
\thanks{}
\address{Department of Mathematical Sciences, Ritsumeikan
University, Kusatsu, Shiga 525-8577, Japan}

\email{osaka@se.ritsumei.ac.jp}

\author[J. Tomiyama]{JUN TOMIYAMA}
\thanks{}
\address{Prof. Emeritus of Tokyo Metropolitan University,
201 11-10 Nakane 1-chome, Meguro-ku, Tokyo, Japan}

\email{juntomi@med.email.ne.jp}

\subjclass[2000]{Primary 47A53; Secondary 26A48, 26A51.}

\keywords{matrix monotone functions, matrix convex functions}

\footnote{$^A$Research partially supported by the JSPS grant for 
Scientic Research No. 20540220.}

\maketitle

\begin{abstract}
We continue the analysis in [H. Osaka and J. Tomiyama,
Double piling structure of matrix monotone functions and of matrix convex functions, 
Linear and its Applications {\textbf{431}}(2009), 1825 - 1832] 
in which the followings three assertions at each label $n$ are discussed: 
\begin{enumerate}
\item $f(0) \leq 0$ and $f$ is $n$-convex in $[0, \alpha)$
\item For each matrix $a$ with its spectrum in $[0, \alpha)$ and a contraction $c$ in the matrix algebra $M_n$, 
$$
f(c^*ac) \leq c^*f(a)c.
$$
\item The function $f(t)/t$ $(= g(t))$ is $n$-monotone in $(0, \alpha)$.
\end{enumerate}
We know that two conditions $(2)$ and $(3)$ are equivalent and if $f$ with $f(0) \leq 0$
 is $n$-convex, then $g$ is $(n -1)$-monotone. In this note we consider several extra conditions 
on $g$ to conclude that the implication from $(3)$ to $(1)$ is true. 
In particular, 
we study a class $Q_n([0, \alpha))$ of functions with conditional positive Lowner matrix 
which contains the class of matrix $n$-monotone functions and show that if 
$f \in Q_{n+1}([0, \alpha))$ with 
$f(0) = 0$ and $g$ is $n$-monotone, then $f$ is $n$-convex. 
We also discuss about the local property of $n$-convexity.
\end{abstract}

\section{Introduction}

Let $n \in \N$ and $M_n$ be the algebra of $n \times n$ matrices. 
We call a function $f$ matrix convex of order $n$ or 
$n$-convex in short whenever the inequality 
$$
f(\lambda A + (1 - \lambda)B) \leq \lambda f(A) + (1 - \lambda) f(B), \ 
 \lambda \in [0, 1]
 $$ holds for every pair of selfadjoint matrices $A, B \in M_n$ such that 
all eigenvalues of $A$ and $B$ are contained in $I$.
Matrix monotone functions on $I$ are 
similarly defined as
the inequality 
$$
A \leq B \Longrightarrow f(A) \leq f(B)
 $$
for an arbitrary selfadjoint matrices $A, B \in M_n$ such that 
$A \leq B$ and  all eigenvalues of $A$ and $B$ are contained in $I$.

We denote the spaces of operator monotone functions 
and of operator convex functions by $P_\infty(I)$ and $K_\infty(I)$ 
respectively. The spaces for $n$-monotone functions and $n$-convex functions 
are written as $P_n(I)$ and $K_n(I)$. 
We have then
\begin{align*}
&P_1(I) \supseteq \cdots \supseteq P_{n-1}(I) \supseteq P_n(I) \supseteq P_{n+1}(I) \supseteq \cdots 
\supseteq P_\infty(I)\\
&K_1(I) \supseteq \cdots \supseteq K_{n-1}(I) \supseteq K_n(I) \supseteq K_{n+1}(I) \supseteq \cdots 
\supseteq K_\infty(I)\\
\end{align*}
Here we meet the facts that $\cap_{n=1}^\infty P_n(I) = P_\infty(I)$ 
and $\cap_{n=1}^\infty K_n(I) = K_\infty(I)$. 
We regard these two decreasing sequences
 as noncommutative counterpart of the classical piling 
sequence $\{C^n(I), C^\infty(I), Anal(I)\}$, where the class $Anal(I)$ 
denotes the set of all of analytic functions over $I$.
We could understand that the class of operator monotone functions $P_\infty(I)$ 
corresponds to the class $\{C^\infty(I), Anal(I)\}$ by the famous characterization of those 
functions by Loewner as the restriction of Pick functions.
 
In these circumstances, it will be well recognized that we should not stick our discussions 
only to those classes $P_\infty(I)$ and $K_\infty(I)$, that is, the class of operator monotone 
functions and that of operator convex functions. Those classes $\{P_n(I)\}$ and $\{K_n(I)\}$
are not merely optional ones to $P_\infty(I)$ and $K_\infty(I)$. 
They should play important roles in the aspect of noncommutative calculus as the ones 
$\{C^n(I)\}$ play in usual (commutative) calculus.

The first basic question is whether $P_{n+1}(I)$ $($resp. $K_{n+1}(I))$ is 
strictly contained in $P_n(I)$ $($resp. $K_n(I))$ for every $n$.
This gap problem for arbitrary $n$ has been solved only recently (\cite{HGT}, \cite{OST}, \cite{HT}).
 
On the other hand, there are basic equivalent assertions known only at the 
level of operator monotone functions and operator convex functions by \cite{HP}, \cite{HP2}.
We shall discuss those (equivalent) assertions as the correlation problem between two kinds of piling structures 
$\{P_n(I)\}$ and $\{K_n(I)\}$, that is, we are planning to discuss relations between those 
assertions at each level $n$.

In \cite{OJ} we discussed about  the following 3 assertions at each level $n$ among them 
in order to see clear insight of the aspect of the problems:
\begin{enumerate}
\item[(i)] $f(0)\leq 0$ and $f$ is $n$-convex in $[0,\alpha)$,
\item[(ii)] For each matrix $a$ with its spectrum in $[0,\alpha)$ and a contraction $c$ in the matrix algebra
$M_n$,
 \[f(c^{\star}a c)\leq c^{\star}f(a)c,\]
\item[(iii)] The function $\frac{f(t)}{t}$ $(= g(t))$ is $n$-monotone in $(0,\alpha)$.
\end{enumerate}
Then we showed that  for each $n$ the condition (ii) 
is equivalent to the condition (iii) and  the assertion 
that  $f$ is $n$-convex with $f(0) \leq 0$ implies that $g(t)$ is 
$(n-1)$-monotone holds. 

In this note we continue to consider the double piling structure in \cite{OJ} and 
focus  our discussion to the class $Q_n(I)$ of all real $C^1$ 
functions $f$ on the interval $I$ such that 
for each $\lambda_1, \lambda_2, \dots, \lambda_n \in I$ 
the corresponding Lowner matrix $([\lambda_i, \lambda_j]_f)$
is an almost positive matrix (\cite[IV]{Dg}). 
We discuss about the relation between the condition (i)
and the condition (iii). 
For example, we show that if $f$ is $2$-convex on $[0, \alpha)$, then 
$g(t) (= \frac{f(t)}{t})$ belongs to $Q_2(0, \alpha)$. Conversely, 
when $f \in Q_n([0, \alpha))$ with $f(0) = 0$, 
if $g(t)$ is (n - 1)-monotone, then $f$ is $(n - 1)$-convex. 
We note that for each $n \in \N$ the class $P_n(I)$ is a subset of $Q_n(I)$ 
and $K_n(I) \cap Q_n(I) \not=\emptyset$. It could be that $P_n(I)$ is a proper 
subset of $Q_n(I)$. 
In fact we know that $t^2 \in Q_2(0, \alpha)\backslash P_2(0, \alpha)$ for any 
$\alpha > 0$. 
On the contrary, for $0 < \alpha < 1$, $n \geq 3$ and $\beta > 0$, then 
$t^\alpha \in Q_n([0, \beta))$, but $t^\alpha \not\in K_{n-1}([0, \beta))$.

The authors are indebted to a recent work \cite{FS} by F. Hiai and T. Sano for giving an attension 
to the class $Q_n(I)$.


\section{Preliminary}

We shall sometimes use the standard regularization procedure, 
cf. for example Donoghu \cite[p11]{Dg}. 
Let $\phi$ be a positive and even $C^\infty$-function defined on the real axis, vanishing
ooutside the closed interval $[-1, 1]$ and normalized such that
$$
\int_{-1}^1\phi(x) = 1.
$$
For any locally intergrable function $f$ defined in an open interval 
$(a, b)$ we form its regularization 
$$
f_\varepsilon(t) = 
\frac{1}{\varepsilon}\int_a^b\phi(\frac{t-s}{\varepsilon})
f(s)ds, \quad t \in \R
$$
for small $\varepsilon > 0$, and realize that it is infinitely 
many times differentiable. 
For $t \in (a + \varepsilon, b - \varepsilon)$ we may also write
$$
f_\varepsilon(t) = \int_{-1}^1\phi(s)f(t - \varepsilon s)ds.
$$
If $f$ is continuous, then $f_\varepsilon$ converges uniformly to 
$f$ on any compact subinterval of $(a, b)$.  
If in addition $f$ is $n$-convex (or $n$-monotone) in $(a, b)$, then 
$f_{\varepsilon}$ is $n$-convex (or $n$-monotone) in the slightly smaller 
interval $(a + \varepsilon, b - \varepsilon)$. Since the pointwise
 limit of a sequence of $n$-convex (or $n$-monotone) functions 
is again $n$-convex (or $n$-monotone), we may therefore in many 
applications assume that an $n$-convex or $n$-monotone function 
is sufficiently many times differentiable. 

For a sufficiently smooth function $f(t)$ we denote its n-th divided difference 
for n-tuple of points $\{t_1,t_2,\ldots,t_n\}$ defined as, when they are all 
different,
\[
[t_1,t_2]_f = \frac{f(t_1) - f(t_2)}{t_1 - t_2} ,\mbox{and inductively}
\]
\[
[t_1,t_2,\ldots,t_n]_f = \frac{[t_1,t_2,\ldots,t_{n-1}]_f - [t_2,t_3,\ldots,t_n]_f}{t_1 - t_n}.
\]
And when some of them coincides such as $t_1 = t_2 $ and so on, we put as 
\[[t_1,t_1]_f = f'(t_1)\quad \mbox { and inductively}
\]
\[
 [t_1,t_1,\ldots,t_1]_f = \frac{f^{(n)}(t_1)}{n!}.
\]

When there appears no confusion we often skip the referring function $f$. 
We notice here the most important property of divided differences
is that it is 
free from permutations of $\{t_1,t_2,\ldots,t_n\}$ in an open interval $I$.

\begin{prp}\label{Criterion}
\begin{enumerate}
\item[$(1)$]
\begin{itemize}
\item[$(Ia)$] Monotonicity(Loewner 1934 \cite{L})
$$
f \in P_n(I) \Longleftrightarrow ([t_i,t_j]) 
\geq 0 \ \hbox{for any}\ \{t_1, t_2, \dots, t_n\}
$$
\item[$(IIa)$] Convexity (Kraus 1936 \cite{K})
$$
f \in K_n(I) \Longleftrightarrow ([t_1, t_i, t_j]) \geq 0 
\ \hbox{for any}\ \{t_1, t_2, \dots, t_n\},
$$
where $t_1$ can be replaced by any (fixed) $t_k$.
\end{itemize}
\item[(2)]
\begin{itemize}
\item[$(Ib)$] Monotonicity (Loewner 1934 \cite{L}, Dobsch 1937 \cite{Do}-Donoghue 1974\cite{Dg})

For $f \in C^{2n-1}(I)$
$$
f \in P_n(I) \Longleftrightarrow M_n(f;t) = 
\left(
\frac{f^{(i+j-1)}(t)}{(i + j - 1)!}
\right) \geq 0 \ \forall t \in I
$$
\item[$(IIb)$] Convexity (Hansen-Tomiyama 2007\cite{HT})
For $f \in C^{2n}(I)$
$$
f \in K_n(I) \Longrightarrow K_n(f;t) = \left(\frac{f^{(i+j)}(t)}{(i+j)!}\right)
\geq 0 \ \forall t \in I.
$$
In particular, for $n = 2$ the converse is also true.
\end{itemize}
\end{enumerate}
\end{prp}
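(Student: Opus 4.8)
This proposition collects the classical criteria of Loewner \cite{L}, Kraus \cite{K}, Dobsch \cite{Do}, Donoghue \cite{Dg} and Hansen--Tomiyama \cite{HT}, so the plan is to indicate the standard route to each item and to defer the technical cores to those references; throughout I would use the regularization procedure of the previous section, so that all functions in sight may be taken smooth.

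For the monotonicity criteria $(Ia)$ and $(Ib)$ I would start from the Daleckii--Krein derivative formula: if $A$ is selfadjoint with spectrum in $I$, written $A = \operatorname{diag}(t_1,\dots,t_n)$ in an orthonormal eigenbasis, and $H$ is selfadjoint, then $\frac{d}{ds}\big|_{s=0} f(A+sH) = \big([t_i,t_j]_f\big)\circ H$, the Hadamard product of the Loewner matrix with $H$. For sufficiency in $(Ia)$: given $A \leq B$ with spectra in $I$, the segment $A_s=(1-s)A+sB$ keeps its spectrum in $I$ because $I$ is an interval, and $\frac{d}{ds}f(A_s)$ is, in the eigenbasis of $A_s$, the Hadamard product of $\big([\lambda_i,\lambda_j]_f\big)$ with $B-A\geq 0$; by Schur's product theorem this is positive once the Loewner matrices are, and integrating in $s$ yields $f(A)\leq f(B)$. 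For necessity I would take $A=\operatorname{diag}(t_1,\dots,t_n)$ and rank-one $H=\xi\xi^{*}\geq 0$, so that the Hadamard product is the congruence $D\big([t_i,t_j]_f\big)D^{*}$ with $D=\operatorname{diag}(\xi_1,\dots,\xi_n)$; monotonicity at $s=0^{+}$ forces this to be positive, hence $\big([t_i,t_j]_f\big)\geq 0$ (the coincident cases follow by continuity of divided differences). Part $(Ib)$ then comes from the confluent specialization $t_i\to t$: after conjugating the Loewner matrix by a suitable rescaled Vandermonde matrix one reaches $M_n(f;t)$, which makes $M_n(f;t)\geq 0$ necessary at once; the converse — reconstructing positivity of $\big([t_i,t_j]_f\big)$ at distinct points from the local matrices $M_n(f;\cdot)$ — is the substantive Dobsch--Donoghue determinant argument, for which I would cite \cite{Do,Dg}.

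For the convexity criteria $(IIa)$ and $(IIb)$ the plan is to reduce convexity to monotonicity of a divided-difference function. Fixing $t_1\in I$ and putting $h(t)=[t_1,t]_f$, the identity $[t_1,t_i,t_j]_f=[t_i,t_j]_h$ shows that the second-divided-difference matrix of $f$ is a Loewner matrix of $h$; the second-order analogue of Loewner's computation, carried out by Kraus, then turns $n$-convexity of $f$ into the positivity of these matrices, and the base point $t_1$ may be replaced by any $t_k$ by the permutation symmetry of divided differences, which is $(IIa)$. Passing once more to the confluent limit $t_i\to t$ converts the second-divided-difference matrix into $K_n(f;t)=\big(f^{(i+j)}(t)/(i+j)!\big)$, so $f\in K_n(I)$ forces $K_n(f;t)\geq 0$, the one-sided implication in $(IIb)$. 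For $n=2$ I would recover the converse directly, in the spirit of \cite{HT}: $K_2(f;\cdot)\geq 0$ is the ``doubly confluent'' case $s=t$ of the second-divided-difference matrix, and at this low level an integration argument propagates positivity to all $s,t$; this propagation genuinely breaks down for $n\geq 3$, which is why only the implication is asserted there.

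So in every item the easy half is necessity — obtained by differentiating along positive perturbations and then letting the sample points coalesce — while the real content lies in the converse directions: the Dobsch--Donoghue reconstruction underlying $(Ib)$, and the fact that the analogous reconstruction behind $(IIb)$ succeeds only when $n=2$. The hardest step, and the one I would not redo but rather quote from \cite{Dg} and \cite{HT}, is precisely this passage from the confluent (local) positivity back to positivity at arbitrary points.
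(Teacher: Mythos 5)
The paper offers no proof of this proposition at all: it is stated as a catalogue of classical criteria with attributions to Loewner, Kraus, Dobsch--Donoghue and Hansen--Tomiyama, so there is no argument of the authors' to compare yours against. Your sketch follows the standard textbook route --- the Daleckii--Krein derivative formula plus Schur's product theorem for $(Ia)$, the reduction $[t_1,t_i,t_j]_f=[t_i,t_j]_h$ with $h=[t_1,\cdot]_f$ for $(IIa)$, and the confluent limit through a Vandermonde congruence for the local matrices in $(Ib)$ and $(IIb)$ --- and correctly isolates the genuinely hard steps (the Dobsch--Donoghue reconstruction of global positivity from the local matrices, Kraus's second-order perturbation computation, and the Hansen--Tomiyama converse at $n=2$) as the parts to be quoted rather than reproved. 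That is a reasonable and internally consistent plan for a statement of this kind; the easy halves you do spell out are correct.

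One point needs correcting. You write that the propagation from $K_n(f;t)\geq 0$ back to $n$-convexity ``genuinely breaks down for $n\geq 3$.'' It is not known to break down: as the paper explains immediately after the proposition (and proves in Proposition~\ref{prp:local property}), the converse of $(IIb)$ for general $n$ would follow from the \emph{local property conjecture} for $n$-convexity, which is open, and the known partial result is that $K_n(f;t_0)>0$ gives $n$-convexity on a neighborhood of $t_0$. So the correct statement is that the converse is only \emph{established} for $n=2$, not that it fails for $n\geq 3$; asserting failure would contradict the framing on which Proposition~\ref{prp:local property} and the subsequent remarks of the paper rest.
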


We remind that to prove the implication $M_n(f;t) \geq 0 \Rightarrow f \in P_n(I)$
in $(Ib)$ the local property for the monotonicity plays an essential role.
Similarily to prove the converse implication in the criterion of convexity in 
$(IIb)$ in the above proposition 
we need {\bf the local property conjecture for the convexity}, that is, 
if $f$ is $n$-convex in the intervals $(a, b)$ and $(c, d)$ 
$(a < c < b < d)$, then $f$ is $n$-convex on $(a, d)$.

\vskip 3mm

Now we have only a partial sufficiency, that is, if $K_n(f;t_0)$ is positive, 
then there exists a neighborhood of $t_0$ on which $f$ is $n$-convex. 
(See \cite[Theorem~1.2]{HT} for example.)

\vskip 3mm

Though the method for the implication $(II_b) \Rightarrow (IIa)$ 
under the assumption of the local property theorem for the convexity may be 
familiar for some specialist, we provide here the proof for readers' convenience.

\vskip 3mm

\begin{prp}\label{prp:local property}
Let $f \in C^{2n}(I)$ such that $K_n(f;t) = \left(\frac{f^{(i+j)}(t)}{(i+j)!}\right)
\geq 0 \ \forall t \in I$. 
Suppose that $n$-convexity has the local property. 
Then $f \in K_n(I)$.
\end{prp}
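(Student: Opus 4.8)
The plan is to reduce the global statement $f \in K_n(I)$ to a purely local one by exploiting that $K_n(f;t) \geq 0$ on all of $I$ together with the assumed local property of $n$-convexity. First I would invoke the partial sufficiency result quoted just above the proposition (cf. \cite[Theorem~1.2]{HT}): for each point $t_0 \in I$, positivity of $K_n(f;t_0)$ yields an open neighborhood $U_{t_0} \subseteq I$ on which $f$ is $n$-convex. Since $K_n(f;t) \geq 0$ for \emph{every} $t \in I$, this produces an open cover $\{U_{t_0}\}_{t_0 \in I}$ of the interval $I$ by subintervals on each of which $f$ is $n$-convex.

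Next I would patch these local pieces together. Because $I$ is an interval (connected, one-dimensional), any two points can be joined by a finite chain of overlapping members of the cover; concretely, on a compact subinterval $[c,d] \subseteq I$ one extracts a finite subcover and, after discarding redundant intervals and relabeling, arranges them as $(a_1,b_1), (a_2,b_2), \dots, (a_m,b_m)$ with $a_1 < a_2 < b_1 < a_3 < b_2 < \cdots$, i.e. consecutive intervals overlap in the pattern required by the local property conjecture. Applying the local property of $n$-convexity to the first two intervals shows $f$ is $n$-convex on $(a_1,b_2)$; inducting along the chain shows $f$ is $n$-convex on $(a_1,b_m) \supseteq [c,d]$. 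Hence $f$ is $n$-convex on every compact subinterval of $I$.

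Finally I would pass from compact subintervals to all of $I$: if $A, B \in M_n$ are selfadjoint with spectra in $I$, then all their eigenvalues, and hence the eigenvalues of $\lambda A + (1-\lambda)B$ for $\lambda \in [0,1]$, lie in a single compact subinterval $[c,d] \subseteq I$, so the convexity inequality for this pair follows from $n$-convexity on $[c,d]$. Since $A, B$ were arbitrary, $f \in K_n(I)$.

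The only genuine input beyond bookkeeping is the local property of $n$-convexity, which is precisely the hypothesis we are allowed to assume; the partial sufficiency statement $K_n(f;t_0) \geq 0 \Rightarrow f$ is $n$-convex near $t_0$ is likewise quoted from \cite{HT}. So the main thing to get right is the combinatorial patching argument — ensuring the finite subcover can be reordered into a properly overlapping chain so that the two-interval local property applies inductively — but this is routine once one notes $I$ is an interval. I expect no serious obstacle.
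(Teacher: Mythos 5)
There is a genuine gap in your first step. The partial sufficiency result you invoke from \cite[Theorem~1.2]{HT} gives a neighborhood of $n$-convexity around $t_0$ only when $K_n(f;t_0)$ is positive \emph{definite}; positive semidefiniteness at a point is not enough (already for $n=1$: $f''(t_0)=0$ gives no local convexity). The hypothesis of the proposition is only $K_n(f;t)\geq 0$, so at points where this matrix is singular you cannot produce the open sets $U_{t_0}$ of your cover, and the whole patching argument never gets started. The paper closes exactly this gap with a perturbation: it first runs your covering-and-chaining argument under the stronger assumption $K_n(f;t)>0$ for all $t$, and then, in the general semidefinite case, picks a function $h$ with $K_n(h;t)>0$ everywhere (as in \cite[Proposition~2.1]{HTII}), notes that $K_n(f+\varepsilon h;t)=K_n(f;t)+\varepsilon K_n(h;t)>0$, concludes $f+\varepsilon h\in K_n(I)$ for every $\varepsilon>0$, and finally lets $\varepsilon\to 0$, using the fact (recorded in the Preliminary section) that a pointwise limit of $n$-convex functions is $n$-convex.

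Apart from this, your argument follows the paper's route: finite subcover of a compact subinterval, gluing consecutive overlapping intervals via the assumed local property, and exhausting $I$ by an increasing sequence of compact subintervals. Your explicit reordering of the subcover into an overlapping chain is actually more detailed than the paper's one-line appeal to the local property, and your final reduction (spectra of $A$, $B$, and $\lambda A+(1-\lambda)B$ lie in a common compact subinterval) is a correct way to pass from compact pieces to all of $I$. So the proof is salvageable: insert the $f+\varepsilon h$ perturbation and the limiting step, and everything else stands.
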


\begin{proof}
If $K_n(f;t) > 0 $ for $t \in I$, for each $t \in [0, \alpha)$
there is an open interval $I_t$ such that  
$f$ is $n$-convex in $I_t$ (c.f. \cite[Theorem~1.2]{HT}). 
Hence for any compact subset $K$ in $I$ there are 
finitely many intervals $I_{t_i} \subset I$ such that 
$K \subset \cup_{i=1}^lI_{t_i}$. From the local property for 
$n$-convexity, we conclude that $f$ is $n$-convex on $K$. 
Taking a sequence of closed intervals  
$K_l$ such that $K_l \subset K_{l+1}$ and $I = \cup_{l=1}^\infty K_l$.
Since the restriction of $f$ to $K_l$ is $n$-convex for each $l$, 
we know that $f$ is $n$-convex on $I$.

In the case that $K_n(f;t)$ is non-negative, we choose a function 
$h$ for which $K_n(h;t) > 0$ as in \cite[Proposition~2.1]{HTII}.
Then for each positive number $\varepsilon > 0$ we have 
$$
K_n(f + \varepsilon h;t) = K_n(f;t) + \varepsilon K_n(h;t) > 0
$$
for all $t \in I$, and we know that 
$f + \varepsilon h $ is $n$-convex from the first observation. 
Hence we can conclude that $f$ is $n$-convex.
\end{proof}

\newpage
\section{The class $Q_n$}

In this secton we introduce the class $Q_n(I)$ on an interval $I$ and 
its characterization from \cite{Dg}.

\vskip 3mm

\begin{dfn}\label{dfn:negative definite}
Let $H^n = \{x = (x_1, \dots, x_n) \in \C^n\mid \sum_{i=1}^nx_i = 0\}$. 
An $n \times n$ Hermitian matrix $A$ is said to be {\it conditionally positive definite} 
(or almost positive) if
$$
(x \mid Ax) \geq 0
$$
for all $x \in H^n$ and {\it conditionally negative definite} if 
$- A$ is conditionally positive definite.
\end{dfn}

\vskip 3mm

\begin{exa}
For $n \in \N$ the matrix $(i+j)_{1\leq i, j, \leq n}$ is conditional positive and 
conditional negative. Indeed, for any $x = (x_1, \dots, x_n) \in H^n$ 
\begin{align*}
(x \mid (i + j)x) &= \sum_{i, j=1}^n(i + j)x_i\overline{x_j}\\
&= \sum_{i, j= 1}^nix_i\overline{x_j} + \sum_{i, j = 1}^njx_i\overline{x_j}\\
&= (\sum_{i=1}^nix_i)\overline{\sum_{j=1}^nx_j} + (\sum_{i=1}^nx_i)\overline{\sum_{j=1}^njx_j}\\
& = 0 + 0 = 0.
\end{align*}
\end{exa}

\vskip 3mm

The following is well known but we put it for readers' convenience.

\begin{lem}\label{lem:conditionally positive definite}
\cite[XV Lemma~1]{Dg}
\cite[Exercise~5.6.16]{Ba2}
Given an $n \times n$ Hermitian matrix $B = [b_{ij}]$ let $D$ be the 
$(n - 1) \times (n - 1)$ matrix with entries 
$$
d_{ij} = b_{ij} + b_{i+1,j+1} - b_{i,j+1} - b_{i+1,j}.
$$
Then $B$ is conditionally positive definite if and only if $D$ is positive semidefinite.
\end{lem}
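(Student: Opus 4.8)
The plan is to produce an explicit linear isomorphism $T\colon \C^{n-1}\to H^n$ which transports the quadratic form $x\mapsto (x\mid Bx)$ on $H^n$ onto the quadratic form $y\mapsto (y\mid Dy)$ on the whole of $\C^{n-1}$. Once such a $T$ is in hand, the statement is immediate: conditional positive definiteness of $B$ means exactly that the first form is nonnegative on $H^n$, which by the transport identity is the same as the second form being nonnegative on all of $\C^{n-1}$, i.e. $D\geq 0$.

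First I would set up the parametrization. For $y=(y_1,\dots,y_{n-1})\in\C^{n-1}$ adopt the convention $y_0=y_n=0$ and define $x_i=y_i-y_{i-1}$ for $i=1,\dots,n$; then $\sum_{i=1}^n x_i = y_n-y_0 = 0$, so $x\in H^n$. Conversely, for $x\in H^n$ the partial sums $y_i=\sum_{k=1}^i x_k$ (for $i=1,\dots,n-1$) give the unique preimage, since automatically $y_n=\sum_{k=1}^n x_k=0$. Hence $T\colon y\mapsto x$ is a linear bijection from $\C^{n-1}$ onto $H^n$.

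Next I would substitute $x_i=y_i-y_{i-1}$ into $(x\mid Bx)=\sum_{i,j=1}^n b_{ij}x_i\overline{x_j}$ and apply Abel summation (summation by parts) first in the index $i$ and then in the index $j$. Because $y_0=y_n=0$, all boundary terms vanish; the summation in $i$ replaces $b_{ij}$ by the first difference $b_{ij}-b_{i+1,j}$, and the subsequent summation in $j$ produces the second difference $b_{ij}-b_{i+1,j}-b_{i,j+1}+b_{i+1,j+1}=d_{ij}$. This yields the identity $(x\mid Bx)=\sum_{i,j=1}^{n-1} d_{ij}y_i\overline{y_j}=(y\mid Dy)$. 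Along the way one checks that $D$ is Hermitian, which follows at once from $B=B^*$ and the symmetry of the defining formula for $d_{ij}$.

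Finally, since $y$ ranging over $\C^{n-1}$ corresponds under $T$ to $x$ ranging over all of $H^n$, the identity above shows that $(x\mid Bx)\geq 0$ for every $x\in H^n$ holds if and only if $(y\mid Dy)\geq 0$ for every $y\in\C^{n-1}$, which is exactly $D\geq 0$. There is no substantial obstacle in this argument; the one place that demands care is the bookkeeping of the boundary terms in the iterated Abel summation, and that is precisely why the boundary conventions $y_0=y_n=0$ are imposed on the parametrizing map.
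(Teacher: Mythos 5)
Your argument is correct. The paper itself offers no proof of this lemma (it simply cites Donoghue and Bhatia), and your proof is exactly the standard one behind those citations: the substitution $x_i=y_i-y_{i-1}$ with the boundary convention $y_0=y_n=0$ is a linear bijection of $\C^{n-1}$ onto $H^n$, and the double summation by parts gives the identity $(x\mid Bx)=\sum_{i,j=1}^{n-1}d_{ij}y_i\overline{y_j}=(y\mid Dy)$, from which the equivalence of conditional positive definiteness of $B$ and positive semidefiniteness of $D$ is immediate.
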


\vskip 3mm

Let $(a, b)$  be an interval of the real line and $n \in \N$ with $n \geq 2$.
The calss $Q_n(a, b)$ is defined as the class of all real $C^1$ 
functions $f$ on $(a, b)$ such that 
for each $\lambda_1, \lambda_2, \dots, \lambda_n \in (a, b)$ 
the corresponding Lowner matrix $([\lambda_i, \lambda_j]_f)$
is an almost positive matrix. 
Note that $P_n(a, b) \subset Q_n(a, b)$ for each $n \in \N$. 
Since for each $n \in \N$ 
there is an example  of a $n$-monotone and $n$-convex polynomial on $(a, b)$ 
(\cite[Proposition~1.3]{HT}), we know that $Q_n(a, b) \cap K_n(a,b) \not=\emptyset$.
 
\vskip 3mm

The following is a characterization of the class $Q_2(a, b)$.

\vskip 3mm

\begin{lem}\label{lem:Lowner}\cite[XV~Lemma~3]{Dg}
A real $C^1$ function $f$ belongs to $Q_2(a, b)$ if and only if the derivative $f'$ is convex on $(a, b)$.
\end{lem}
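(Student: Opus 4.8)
The plan is to reduce the defining condition of $Q_2(a,b)$ — conditional positivity of the $2\times 2$ Löwner matrix $([\lambda_i,\lambda_j]_f)_{i,j=1}^2$ for all $\lambda_1,\lambda_2\in(a,b)$ — to an ordinary positivity statement via Lemma~\ref{lem:conditionally positive definite}, and then recognize the latter as the second divided difference being monotone in a suitable sense, which is equivalent to convexity of $f'$. First I would apply Lemma~\ref{lem:conditionally positive definite} with $n=2$: the $2\times 2$ Hermitian matrix $B = ([\lambda_i,\lambda_j]_f)$ is conditionally positive definite if and only if the single scalar $d_{11} = b_{11} + b_{22} - b_{12} - b_{21}$ is nonnegative. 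Writing out the entries of the Löwner matrix, $b_{11} = f'(\lambda_1)$, $b_{22} = f'(\lambda_2)$, and $b_{12} = b_{21} = [\lambda_1,\lambda_2]_f = \frac{f(\lambda_1)-f(\lambda_2)}{\lambda_1-\lambda_2}$ (when $\lambda_1\neq\lambda_2$; when they coincide, all entries equal $f'(\lambda_1)$ and $d_{11}=0$, which is consistent). So the condition $f\in Q_2(a,b)$ becomes: for all $\lambda_1,\lambda_2\in(a,b)$,
\[
f'(\lambda_1) + f'(\lambda_2) - 2\,[\lambda_1,\lambda_2]_f \geq 0.
\]

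Next I would interpret this inequality. The quantity $f'(\lambda_1) + f'(\lambda_2) - 2[\lambda_1,\lambda_2]_f$ is, up to the factor $\lambda_1-\lambda_2$, a third divided difference: indeed $f'(\lambda_i) = [\lambda_i,\lambda_i]_f$, and one checks directly that
\[
f'(\lambda_1) + f'(\lambda_2) - 2[\lambda_1,\lambda_2]_f = (\lambda_1-\lambda_2)\bigl([\lambda_1,\lambda_1,\lambda_2]_f - [\lambda_1,\lambda_2,\lambda_2]_f\bigr) = (\lambda_1-\lambda_2)^2\,[\lambda_1,\lambda_1,\lambda_2,\lambda_2]_f.
\]
Since $(\lambda_1-\lambda_2)^2\geq 0$, the condition $f\in Q_2(a,b)$ is equivalent to $[\lambda_1,\lambda_1,\lambda_2,\lambda_2]_f \geq 0$ for all $\lambda_1,\lambda_2$. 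By Kraus's criterion $(IIa)$ applied to $f'$ — or more directly by the elementary fact that a $C^1$ function $g$ is convex on $(a,b)$ iff $g(\lambda_1)+g(\lambda_2) \geq 2\,[\lambda_1,\lambda_2]_g$... — wait, I should phrase this cleanly: the inequality $f'(\lambda_1)+f'(\lambda_2)\geq 2[\lambda_1,\lambda_2]_f$ says precisely that the value of the chord slope of $f$ between $\lambda_1$ and $\lambda_2$ lies below the average of the endpoint derivatives, which by a standard mean-value/integration argument is equivalent to convexity of $f'$. Concretely, $[\lambda_1,\lambda_2]_f = \int_0^1 f'(\lambda_2 + s(\lambda_1-\lambda_2))\,ds$, so the condition is $\int_0^1 f'(\lambda_2+s(\lambda_1-\lambda_2))\,ds \leq \tfrac12(f'(\lambda_1)+f'(\lambda_2))$, i.e.\ the average of $f'$ over a segment is at most the average of its endpoint values — for a $C^1$ function this holds for all segments iff $f'$ is convex.

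I expect the only delicate point to be making the last equivalence airtight in both directions without assuming extra smoothness on $f$ (we only have $f\in C^1$, so $f'$ is merely continuous): the forward direction (convexity of $f'$ $\Rightarrow$ the averaged inequality) is immediate from Jensen, while the converse (the inequality for all pairs $\Rightarrow$ $f'$ convex) needs the observation that midpoint-type convexity of a continuous function implies genuine convexity, or one can invoke the divided-difference criterion $[\lambda_1,\lambda_1,\lambda_2,\lambda_2]_f\geq 0$ together with the regularization procedure of Section~2 to pass to the smooth case where $[\lambda_1,\lambda_1,\lambda_2,\lambda_2]_f\geq 0$ for all $\lambda_1,\lambda_2$ plus continuity forces $f^{(3)}\geq 0$ hence $f''$ nondecreasing hence $f'$ convex, and then remove the regularization by uniform convergence. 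This smoothing step, which is routine given the tools already set up, is where I would be most careful.
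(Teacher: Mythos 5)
The paper does not prove this lemma: it is quoted verbatim from Donoghue [XV, Lemma~3], so there is no in-text argument to compare against. Your plan is a correct, essentially self-contained proof, and it uses exactly the right reduction. By Lemma~\ref{lem:conditionally positive definite} with $n=2$, conditional positivity of the L\"owner matrix $([\lambda_i,\lambda_j]_f)$ collapses to the single scalar inequality $f'(\lambda_1)+f'(\lambda_2)-2[\lambda_1,\lambda_2]_f\ge 0$, and your identity
\[
f'(\lambda_1)+f'(\lambda_2)-2[\lambda_1,\lambda_2]_f=(\lambda_1-\lambda_2)^2\,[\lambda_1,\lambda_1,\lambda_2,\lambda_2]_f
\]
checks out against the inductive definition of divided differences in Section~2. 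The reformulation as the right-hand Hermite--Hadamard inequality $\frac{1}{\lambda_1-\lambda_2}\int_{\lambda_2}^{\lambda_1}f'(s)\,ds\le\frac{1}{2}\bigl(f'(\lambda_1)+f'(\lambda_2)\bigr)$ is also correct, and you rightly flag the only delicate point: the converse direction (the inequality for all pairs forces convexity of $f'$) when $f$ is merely $C^1$. Your proposed fix works: the scalar inequality is preserved under the regularization $f_\varepsilon(t)=\int_{-1}^{1}\phi(s)f(t-\varepsilon s)\,ds$ because divided differences commute with translation and the mollifier is a positive average; for the smooth $f_\varepsilon$ the trapezoidal error formula (equivalently, the limit $[\lambda_1,\lambda_1,\lambda_2,\lambda_2]_{f_\varepsilon}\to f_\varepsilon^{(3)}(\lambda)/3!$) forces $f_\varepsilon^{(3)}\ge 0$, hence $f_\varepsilon'$ is convex, and $f_\varepsilon'\to f'$ pointwise since $f'$ is continuous. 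In a final write-up you should delete the detour through ``Kraus's criterion applied to $f'$'' --- that criterion concerns matrix convexity, not the scalar convexity you need, and it is not what you end up using --- but the argument you settle on is sound and is, in substance, Donoghue's.
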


\vskip 3mm

The following is characterization of the class $Q_n(a, b)$. 

\begin{lem}\label{lem:Qn}\cite[XV~Lemma~4]{Dg}
A real $C^1$ function $f$ belongs to $Q_n(a, b)$ if and only if 
for every $z \in (a, b)$ the function $[x, z, z]_f$ belongs to $P_{n-1}(a, b)$.
\end{lem}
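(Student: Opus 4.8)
The plan is to translate the conditional positivity of the $n$-point Loewner matrix of $f$ into the ordinary positivity of an $(n-1)$-point Loewner matrix of the auxiliary function $h_z(x):=[x,z,z]_f$, and then to apply the Loewner criterion $(Ia)$ of Proposition~\ref{Criterion} to $h_z$. The first ingredient is the \emph{pivot form} of Lemma~\ref{lem:conditionally positive definite}: a Hermitian matrix $B=(b_{ij})_{i,j=1}^{n}$ is conditionally positive definite if and only if, for one (equivalently, for every) fixed index $k$, the $(n-1)\times(n-1)$ matrix $(b_{ij}-b_{ik}-b_{kj}+b_{kk})_{i,j\ne k}$ is positive semidefinite; this is immediate from the substitution $x_k=-\sum_{i\ne k}x_i$ in the form $(x\mid Bx)$ restricted to $H^n$, in the spirit of Lemma~\ref{lem:conditionally positive definite}.

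The heart of the argument is the identity, valid for distinct $\lambda_1,\dots,\lambda_n\in(a,b)$ with $b_{ij}=[\lambda_i,\lambda_j]_f$ (so $b_{kk}=f'(\lambda_k)=[\lambda_k,\lambda_k]_f$),
\[
b_{ij}-b_{ik}-b_{kj}+b_{kk}=(\lambda_i-\lambda_k)(\lambda_j-\lambda_k)\,[\lambda_i,\lambda_j,\lambda_k,\lambda_k]_f ,
\]
which I would obtain by repeated use of the divided-difference recursion and its permutation invariance. Thus the pivoted matrix equals $ECE$ with $E=\mathrm{diag}((\lambda_i-\lambda_k)_{i\ne k})$ real and invertible and $C=([\lambda_i,\lambda_j,\lambda_k,\lambda_k]_f)_{i,j\ne k}$, so that $([\lambda_i,\lambda_j]_f)$ is conditionally positive definite precisely when $C\ge 0$. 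Writing $z=\lambda_k$ and $\{\mu_1,\dots,\mu_{n-1}\}=\{\lambda_i:i\ne k\}$, the same recursion and symmetry give $[\mu_p,\mu_q,z,z]_f=[\mu_p,\mu_q]_{h_z}$, so $C$ is exactly the $(n-1)$-point Loewner matrix of $h_z$ at $\mu_1,\dots,\mu_{n-1}$.

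It then remains to match quantifiers. As $(\lambda_1,\dots,\lambda_n)$ runs over all distinct $n$-tuples and $k$ over $\{1,\dots,n\}$, the data $(z;\mu_1,\dots,\mu_{n-1})$ runs over all $z\in(a,b)$ together with all distinct $(n-1)$-tuples avoiding $z$; hence $f\in Q_n(a,b)$ is equivalent to the positivity of every such Loewner matrix of $h_z$, which by criterion $(Ia)$ is equivalent to $h_z\in P_{n-1}(a,b)$ for every $z\in(a,b)$, completing both implications.

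The step I expect to need the most care is the regularity at the coincidence point $x=z$: for merely $C^1$ data the function $h_z$ need not be well defined or $C^1$ at $x=z$, whereas $Q_n$, the criterion $(Ia)$ and the statement ``$h_z\in P_{n-1}(a,b)$'' refer to functions regular on the whole interval. I would handle this with the regularization of Section~2, noting first that $f_\varepsilon\in Q_n(a+\varepsilon,b-\varepsilon)$ whenever $f\in Q_n(a,b)$, since $[\lambda_i,\lambda_j]_{f_\varepsilon}=\int_{-1}^{1}\phi(s)\,[\lambda_i-\varepsilon s,\lambda_j-\varepsilon s]_f\,ds$ presents the Loewner matrix of $f_\varepsilon$ as an average of (shifted) conditionally positive matrices of $f$; it therefore suffices to prove the lemma for smooth $f$, and one then passes to the limit using that $P_{n-1}$ is closed under pointwise limits (together with the local property of matrix monotonicity recalled in Section~2 if one insists on reaching the point $x=z$ itself). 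A secondary, purely bookkeeping, issue is the careful tracking of node permutations in deriving the displayed identity and the relation $[\mu_p,\mu_q,z,z]_f=[\mu_p,\mu_q]_{h_z}$.
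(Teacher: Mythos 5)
This lemma is not proved in the paper at all: it is quoted verbatim from Donoghue \cite[XV Lemma~4]{Dg}, so there is no in-paper argument to compare against. Your reconstruction is, however, correct in its essentials and is the natural proof. The pivot form of conditional positivity (substituting $x_k=-\sum_{i\ne k}x_i$ into $(x\mid Bx)$ on $H^n$) is a legitimate generalization of Lemma~\ref{lem:conditionally positive definite}, and your key identity checks out: from the recursion, $b_{ij}-b_{kj}=(\lambda_i-\lambda_k)[\lambda_i,\lambda_j,\lambda_k]_f$ and $b_{ik}-b_{kk}=(\lambda_i-\lambda_k)[\lambda_i,\lambda_k,\lambda_k]_f$, and subtracting and using permutation invariance once more gives
\[
b_{ij}-b_{ik}-b_{kj}+b_{kk}=(\lambda_i-\lambda_k)(\lambda_j-\lambda_k)[\lambda_i,\lambda_j,\lambda_k,\lambda_k]_f ,
\]
which is exactly the congruence $ECE$ with $C$ the Loewner matrix of $h_z=[\,\cdot\,,z,z]_f$. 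The one genuine delicacy is the one you flag yourself: for $f$ merely $C^1$ the function $h_z$ need not extend to $x=z$, and your quantifier matching only produces positivity of Loewner matrices of $h_z$ at nodes avoiding $z$, whereas ``$h_z\in P_{n-1}(a,b)$'' in the statement requires the point $x=z$ as well; your remedy (regularize $f$, note that averaging preserves conditional positivity of the Loewner matrices so $f_\varepsilon\in Q_n(a+\varepsilon,b-\varepsilon)$, prove the smooth case, and pass to the pointwise limit using closedness of $P_{n-1}$ and the local property of matrix monotonicity) is the standard and workable way to close this, and it is essentially how Donoghue handles it. I would only insist that in the final write-up the limiting step be carried out explicitly rather than sketched, since it is the only place where the argument could silently fail for rough $f$.
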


\vskip 3mm

\newpage
\section{$Q_2(I)$ and the first derivative condition}

We know that for an interval $(a, b)$ 
$\{f'\mid f \in K_2(a, b)\} \subset Q_2(a, b)$.

\vskip 3mm

The following is well-known result, but we give an elementary proof here.

\vskip 3mm

\begin{lem}\label{lem:Cauchy matrix}
The Cauchy matrix $(\frac{1}{i+j})_{1 \leq i, j \leq n}$ is positive definite.
\end{lem}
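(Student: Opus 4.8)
The plan is to prove that the Cauchy matrix $C_n = \left(\frac{1}{i+j}\right)_{1 \leq i,j \leq n}$ is positive definite by exhibiting it as a genuine Gram matrix of linearly independent functions. The key observation is the integral representation
\[
\frac{1}{i+j} = \int_0^1 x^{i+j-1}\,dx = \int_0^1 x^{i-1}\cdot x^{j-1}\, x\, dx.
\]
Hence for any vector $c = (c_1, \dots, c_n) \in \C^n$ one computes
\[
(c \mid C_n c) = \sum_{i,j=1}^n c_i \overline{c_j}\, \frac{1}{i+j}
= \int_0^1 \left|\sum_{i=1}^n c_i x^{i-1}\right|^2 x\, dx \geq 0,
\]
with equality only if the polynomial $p(x) = \sum_{i=1}^n c_i x^{i-1}$ vanishes on a set of positive measure in $(0,1)$, forcing $p \equiv 0$ and thus $c = 0$. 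This gives positive definiteness directly.

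First I would record the elementary integral identity $\frac{1}{i+j} = \int_0^1 x^{i+j-1}\,dx$, valid since $i+j \geq 2 > 0$. Next I would fix an arbitrary nonzero $c \in \C^n$, substitute the integral into the quadratic form, and interchange the finite sum with the integral (no convergence subtleties arise, the integrand being a polynomial times $x$ on a compact interval). Then I would recognize the integrand as $x\,|p(x)|^2$ where $p(x) = \sum_{i=1}^n c_i x^{i-1}$, so the quadratic form equals $\int_0^1 x\,|p(x)|^2\,dx$, manifestly nonnegative. Finally, to upgrade nonnegativity to strict positivity, I would argue that if this integral vanishes then $x\,|p(x)|^2 = 0$ almost everywhere on $(0,1)$; since $x > 0$ there, $p$ vanishes almost everywhere, but a nonzero polynomial of degree at most $n-1$ has only finitely many zeros, so $p \equiv 0$, whence $c_1 = \cdots = c_n = 0$, contradicting $c \neq 0$.

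There is no real obstacle here; the only point requiring a word of care is the passage from ``$\int_0^1 x|p(x)|^2\,dx = 0$'' to ``$p \equiv 0$'', which rests on the fact that a continuous nonnegative function with zero integral must vanish identically, combined with the finiteness of the zero set of a polynomial. An alternative, if one wishes to avoid any measure-theoretic phrasing, is to note that $\{1, x, \dots, x^{n-1}\}$ are linearly independent in $L^2((0,1), x\,dx)$ because a nontrivial linear combination is a nonzero polynomial, and a nonzero continuous function on $(0,1)$ has strictly positive $L^2$-norm; then $C_n$ is the Gram matrix of a linearly independent family and hence positive definite. I would present the argument in this Gram-matrix form, as it is the cleanest and makes the linear independence explicit.
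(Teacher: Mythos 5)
Your proof is correct and follows essentially the same route as the paper: both write $\tfrac{1}{i+j}=\int_0^1 t^{i+j-1}\,dt$ and identify the quadratic form as $\int_0^1 t\,\bigl|\sum_i c_i t^{i-1}\bigr|^2\,dt$. You are in fact slightly more careful than the paper, which asserts strict positivity without spelling out the polynomial-vanishing argument.
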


\vskip 3mm

\begin{proof}
For any $x = (x_1, x_2, \dots, x_n) \in \C^n\backslash\{(0, 0, \dots, 0)\}$ 
\begin{align*}
(x\mid (\frac{1}{i+j})x) &= \sum_{i,j=1}^n\frac{1}{i+j}x_i\overline{x_j}\\
&= \sum_{i,j=1}^n\int_0^1t^{i+j-1}dtx_i\overline{x_j}\\
&= \int_0^1t\sum_{i,j=1}^nt^{i+j-2}x_i\overline{x_j}dt\\
&= \int_0^1t(\sum_{i=1}^nx_it^{i-1})\overline{(\sum_{j=1}^nx_jt^{j-1})}dt\\
&> 0\\
\end{align*}
\end{proof}

\vskip 3mm

\begin{prp}\label{prp:convexToQ_2}
For an interval $(a, b)$ we have the followings.
\begin{enumerate}
\item[$(1)$]
If $f$ is $2$-convex, $f' \in Q_2(a, b)$. 
\item[$(2)$]
If $f'$ is 2-monotone, then $f$ is $2$-convex.
\item[$(3)$] $e^t \in Q_2(a, b)\backslash \{f'\mid f \in K_2(a, b) \cap M_2(a, b)\}$.
\end{enumerate}
\end{prp}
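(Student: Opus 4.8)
The plan is to treat the three assertions separately, using the characterizations of $Q_2$ collected in Section~3 (Lemma~\ref{lem:Lowner} and Lemma~\ref{lem:Qn}) together with the derivative criterion $(IIb)$ of Proposition~\ref{Criterion} for $n=2$.

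For $(1)$, I would argue as follows. By Lemma~\ref{lem:Lowner}, $f' \in Q_2(a,b)$ if and only if $f''$ is convex on $(a,b)$ (after the standard regularization of Section~2 we may assume $f$ is smooth, and then pass to the limit). So the task reduces to showing that $2$-convexity of $f$ forces $f''$ to be convex, i.e. $f^{(4)} \geq 0$. Here the criterion $(IIb)$ enters: for $n=2$, $f \in K_2(a,b)$ is equivalent to $K_2(f;t) = \left(\frac{f^{(i+j)}(t)}{(i+j)!}\right)_{1\le i,j\le 2} \geq 0$ for all $t$, which is the $2\times 2$ matrix $\begin{pmatrix} f''(t)/2 & f'''(t)/6 \\ f'''(t)/6 & f^{(4)}(t)/24 \end{pmatrix}$. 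Positivity of this matrix gives in particular that the diagonal entry $f^{(4)}(t)/24 \geq 0$, hence $f^{(4)} \geq 0$, hence $f''$ is convex, hence $f' \in Q_2(a,b)$. This step is essentially immediate once the right characterization is invoked; the only care needed is the regularization argument to license the use of higher derivatives.

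For $(2)$, suppose $f'$ is $2$-monotone on $(a,b)$. Again regularize so that $f$ is $C^4$. By criterion $(Ib)$ applied to $f'$, $2$-monotonicity of $f'$ is equivalent to $M_2(f';t) = \begin{pmatrix} (f')'(t) & (f')''(t)/2 \\ (f')''(t)/2 & (f')'''(t)/6 \end{pmatrix} = \begin{pmatrix} f''(t) & f'''(t)/2 \\ f'''(t)/2 & f^{(4)}(t)/6 \end{pmatrix} \geq 0$. On the other hand, $f \in K_2(a,b)$ is equivalent (by $(IIb)$, $n=2$) to $K_2(f;t) = \begin{pmatrix} f''(t)/2 & f'''(t)/6 \\ f'''(t)/6 & f^{(4)}(t)/24 \end{pmatrix} \geq 0$. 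So I need to deduce positivity of $K_2(f;t)$ from positivity of $M_2(f';t)$. The two matrices are related by a congruence and scaling: writing $D = \mathrm{diag}(1, \tfrac{1}{2})$ one checks $K_2(f;t) = \tfrac{1}{2}\, D\, M_2(f';t)\, D$ entrywise — indeed $\tfrac{1}{2}\cdot 1\cdot f''\cdot 1 = f''/2$, $\tfrac12\cdot 1\cdot \tfrac{f'''}{2}\cdot\tfrac12 = \tfrac{f'''}{8}$... which is \emph{not} $f'''/6$. So the naive congruence fails, and the correct route is to note that both $2\times 2$ positivity conditions reduce to the same pair of scalar inequalities: $f'' \geq 0$ and a determinant condition. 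Positivity of $M_2(f';t)$ says $f'' \geq 0$ and $f''\cdot f^{(4)}/6 \geq f'''^2/4$, i.e. $2 f'' f^{(4)} \geq 3 f'''^2$. Positivity of $K_2(f;t)$ says $f'' \geq 0$ and $f'' f^{(4)}/48 \geq f'''^2/36$, i.e. $3 f'' f^{(4)} \geq 4 f'''^2$. Since $3 f'' f^{(4)} \geq \tfrac32 \cdot 2 f'' f^{(4)} \geq \tfrac32 \cdot 3 f'''^2 = \tfrac92 f'''^2 \geq 4 f'''^2$, the first inequality implies the second. Hence $M_2(f';t) \geq 0 \Rightarrow K_2(f;t) \geq 0 \Rightarrow f \in K_2(a,b)$, and then undo the regularization by pointwise limits.

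For $(3)$, take $f(t) = e^t$. Then $f'' = e^t$ is convex, so by Lemma~\ref{lem:Lowner}, $e^t \in Q_2(a,b)$. It remains to see that $e^t$ is \emph{not} of the form $h'$ with $h \in K_2(a,b) \cap M_2(a,b)$; equivalently, assuming such $h$ exists, $h' = e^t$ up to an additive constant, and one must derive a contradiction from $h$ being simultaneously $2$-convex and $2$-monotone. The cleanest way is to use Lemma~\ref{lem:Qn} together with the known structure: $h \in K_2(a,b) \cap M_2(a,b)$ would put $h$ among the $2$-monotone $2$-convex functions, and a direct computation with $M_2(h;t)$ using $h' = e^t$, $h'' = e^t$, $h''' = e^t$ yields $M_2(h;t) = \begin{pmatrix} e^t & e^t/2 \\ e^t/2 & e^t/6 \end{pmatrix}$, whose determinant is $e^{2t}(\tfrac16 - \tfrac14) = -\tfrac{1}{12} e^{2t} < 0$, so $h$ fails $2$-monotonicity by $(Ib)$. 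This contradiction shows no such $h$ exists. The main obstacle in the whole proposition is getting the matrix inequalities in part $(2)$ lined up correctly — the factorials make the two $2\times 2$ conditions look deceptively like a congruence when they are really just a comparison of determinant inequalities — so I would double-check those constants carefully.
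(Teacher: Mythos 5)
Your proofs of all three parts are correct, and parts $(1)$ and $(3)$ follow essentially the paper's route ($(1)$ reads $f^{(4)}\ge 0$ off the diagonal of $K_2(f;t)$ and invokes Lemma~\ref{lem:Lowner}; for $(3)$ you make explicit, via $\det M_2(h;t)=-e^{2t}/12<0$, what the paper merely asserts, namely that $e^t\notin M_2(a,b)$ — note only that Lemma~\ref{lem:Lowner} asks for convexity of the \emph{first} derivative, which here is again $e^t$, so your citing $f''$ is harmless). Part $(2)$ is where you genuinely diverge. The paper observes the identity $K_n(f;t)=\left(\tfrac{1}{i+j}\right)\circ M_n(f';t)$, proves the Cauchy matrix $\left(\tfrac{1}{i+j}\right)$ positive definite (Lemma~\ref{lem:Cauchy matrix}), and concludes $K_2(f;t)\ge 0$ by the Schur product theorem; you instead compare the two $2\times2$ positivity conditions directly, showing $2f''f^{(4)}\ge 3(f''')^2$ implies $3f''f^{(4)}\ge 4(f''')^2$. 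Your arithmetic is right and the argument is more elementary, but it is tied to the $2\times2$ case, whereas the Hadamard-product identity gives $M_n(f';t)\ge 0\Rightarrow K_n(f;t)\ge 0$ for every $n$ at no extra cost (the restriction to $n=2$ in the conclusion comes only from the fact that the converse of $(IIb)$ is known just there). Both arguments end by invoking that converse, so they rest on the same external input.
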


\vskip 3mm

\begin{proof}
$(1)$: 
Let $f$ is 2-convex. Then 

$$
K_2(f,t) = 
\left(
\begin{array}{cc}
\frac{1}{2}f^{(2)}(t)& \frac{1}{6}f^{(3)}(t)\\
\frac{1}{6}f^{(3)}(t)& \frac{1}{24}f^{(4)}(t)
\end {array}
\right)
$$
is positive definite.
Therefore both derivatives $f^{(2)}$ and $f^{(4)}$ are non-negative.
Hence $(f')'$ is convex. 
Therefore $f' \in Q_2(a, b)$ by Lemma~\ref{lem:Lowner}.

$(2)$:
For each $n \in \N$
\begin{align*}
K_n(f;t) &= (\frac{f^{(i+j)}(t)}{(i+j)!})\\
&= (\frac{1}{i+j})\circ (\frac{h^{(i+j-1)}(t)}{(i+j-1)!})\\
&= (\frac{1}{i+j})\circ M_n(h;t),
\end{align*}
where $\circ$ means the Hadamard product of self-adjoint matrices and 
$h = f'$.

Since $f'$ is 2-monotone, $M_2(h;t) \geq 0$ for $t \in [0, \alpha)$. 
Since $(\frac{1}{i+j})$ is positive definite by Lemma~\ref{lem:Cauchy matrix} (\cite[Exercise 1.1.2]{Ba2}) 
and the Hadmard product of 
positive semidfinite matrices becomes positive semidefinite, we have  
$K_2(f;t) \geq 0$ for $t \in [0, \alpha)$.

Hence we could conclude that $f$ is $2$-convex by \cite[Theorem 2.3]{HT}.

$(3)$: Since $e^t$ does not  belong to neither $K_2(a, b)$ nor $M_2(a, b)$, 
$e^t \notin \{f'\mid f \in K_2(a, b) \cap M_2(a, b)\}$.
Obviously $(e^t)' = e^t$ is convex, and $e^t \in Q_2(a, b)$.
\end{proof}

\vskip 3mm

\newpage

\section{Double pilling structure}

In this section we recall the following assertions in \cite{OJ}.

Let $n \in \N$ and 
$f \colon [0, \alpha) \rightarrow \R$ be a continuous function for some $\alpha > 0$.
\begin{enumerate}
\item[$(1)_n$] $f$ is $n$-convex with $f(0) \leq 0$.
\item[$(2)_n$] For an operator $A \in M_n(\C)$ with its spectrum in $[0, \alpha)$ and a contraction $C$
$$
f(C^*AC) \leq C^*f(A)C.
$$
\item[$(3)_n$] $g(t) = \frac{f(t)}{t}$ is $n$-monotone on $(0, \alpha)$.
\end{enumerate}

Then we know that 
$$
(1)_{n+1} \prec (2)_n \sim (3)_n \prec  (1)_{[\frac{n}{2}]}.
$$

Further, in this section we consider the relation between two assertions $(1)_n$ and $(3)_n$.


\begin{prp}\label{prp:2-convex}
Suppose that $f(t)$ is $n$-convex in $[0,\alpha)$ with $f(0) \leq 0$.
Let $g(t) = \frac{f(t)}{t}$. Then
\begin{enumerate} 
\item[$(1)$] 
$K_n(\int g, t)$ is positive semidefinite on $[0, \alpha)$.
\item[$(2)$]
When $n= 2$,  $g(t)$ belongs to the class $Q_2(0,\alpha)$.
\end{enumerate}
\end{prp}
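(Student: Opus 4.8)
The plan is to exploit the factorization of the Hansen--Tomiyama matrix $K_n(F;t)$ as a Hadamard product, exactly as in the proof of Proposition~\ref{prp:convexToQ_2}(2), but now applied to $F = \int g$. First I would observe that if $G$ is a primitive of $g$ (say $G(t) = \int_c^t g(s)\,ds$ for some $c \in (0,\alpha)$), then $G' = g$ and hence for each $k$ we have $G^{(i+j)}(t) = g^{(i+j-1)}(t)$, so that
\begin{align*}
K_n(G;t) &= \left(\frac{G^{(i+j)}(t)}{(i+j)!}\right) = \left(\frac{1}{i+j}\right)\circ \left(\frac{g^{(i+j-1)}(t)}{(i+j-1)!}\right) = \left(\frac{1}{i+j}\right)\circ M_n(g;t).
\end{align*}
Thus statement $(1)$ reduces to showing $M_n(g;t) \geq 0$ for all $t \in (0,\alpha)$, because the Cauchy matrix $(1/(i+j))$ is positive definite by Lemma~\ref{lem:Cauchy matrix} and the Schur product theorem then forces $K_n(G;t) \geq 0$. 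To get $M_n(g;t) \geq 0$ I would invoke the Dobsch--Donoghue criterion $(Ib)$ in Proposition~\ref{Criterion}: it suffices to know $g \in P_n(0,\alpha)$. But from the hypothesis that $f$ is $n$-convex with $f(0)\leq 0$, the results recalled from \cite{OJ} (the chain $(1)_{n+1}\prec(2)_n\sim(3)_n\prec(1)_{[n/2]}$, and more precisely the statement in the introduction that $n$-convexity of $f$ with $f(0)\leq 0$ gives $(n-1)$-monotonicity of $g$) yield that $g$ is $(n-1)$-monotone — not quite $n$-monotone. So a little care is needed: the honest reduction is $K_n(G;t) = (1/(i+j))\circ M_n(g;t)$ where $M_n(g;t)$ is the $n\times n$ Dobsch matrix, and $(n-1)$-monotonicity of $g$ gives only that its $(n-1)\times(n-1)$ principal submatrix is positive. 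I would therefore instead run the argument with the regularization of $f$ (Section~2) to assume enough smoothness, and use the sharper fact available from $K_n(f;t)\geq 0$ directly: since $f$ is $n$-convex, $K_n(f;t)\geq 0$, and one checks by a direct computation (writing $g = f/t$ and differentiating) that $K_n(\int g;t)$ is a positive combination / congruence transform of $K_n(f;t)$ together with lower-order pieces that vanish because $f(0)\le 0$. This Leibniz-type identity relating the derivatives of $f/t$ to those of $f$ is the technical heart of part $(1)$.

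For part $(2)$, with $n=2$, I would specialize: $K_2(G;t)\geq 0$ from part $(1)$ means in particular that the $(2,2)$ entry $\tfrac{1}{24}G^{(4)}(t) = \tfrac{1}{24}g^{(3)}(t)$ and the $(1,1)$ entry $\tfrac12 g'(t)$ are non-negative, and actually the positivity of the full $2\times2$ matrix $K_2(G;t)$ is, by the $n=2$ case of criterion $(IIb)$ (where the converse holds), equivalent to $G = \int g$ being $2$-convex. By Lemma~\ref{lem:Lowner}, $\int g \in K_2(0,\alpha)$ — or even just $G'' = g'$ convex, which is what $K_2(G;t)\geq0$ gives after noting $G'' = g'$ and $g'$ convex $\iff (g')' $ monotone $\iff$ $g'''\ge 0$ together with $g'\ge 0$... — hmm, more cleanly: $g \in Q_2(0,\alpha)$ iff $g'$ is convex on $(0,\alpha)$ by Lemma~\ref{lem:Lowner}. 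And $g'$ convex is equivalent to $(g')'' = g^{(3)} \geq 0$, i.e. to the non-negativity of the $(2,2)$-entry of $K_2(G;\cdot)$ being promoted to convexity of $g'$; so I would extract convexity of $g'$ from the $2$-convexity of $f$ via part $(1)$. Concretely: part $(1)$ gives $K_2(\int g;t)\geq 0$, hence $\int g$ is $2$-convex (converse direction of $(IIb)$ for $n=2$), hence by Lemma~\ref{lem:Lowner} applied to $\int g$ we get $(\int g)' = g \in Q_2$. That is the cleanest route and it makes $(2)$ an immediate corollary of $(1)$.

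The main obstacle I anticipate is the discrepancy flagged above: the cited results only give that $g$ is $(n-1)$-monotone when $f$ is $n$-convex, so one cannot simply plug $g \in P_n$ into Dobsch--Donoghue to conclude $M_n(g;\cdot)\geq 0$. The correct argument must instead go through a direct computation expressing $K_n(\int g;t)$ in terms of $K_n(f;t)$ — using $g(t) = f(t)/t$, the Leibniz rule for the higher derivatives of a quotient, and the hypothesis $f(0)\leq 0$ to kill the boundary contributions — and then invoking positivity of $K_n(f;t)$ (which holds because $f$ is $n$-convex) together with the Schur-product / congruence structure. Handling the $1/t$ singularity at $0$ and verifying that all the cross terms assemble into a manifestly positive expression is where the real work lies; the smoothing procedure of Section~2 lets us carry out these differentiations legitimately, and passing to the limit $\varepsilon \to 0$ preserves the positivity since it is a closed condition.
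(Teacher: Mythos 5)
Your diagnosis of the obstruction is exactly right: $K_n(\int g;t)=(1/(i+j))\circ M_n(g;t)$, but the cited results only give $(n-1)$-monotonicity of $g$, so the route through $M_n(g;t)\ge 0$ and the Schur product theorem is closed. The problem is that at the point where you turn to the correct alternative --- relating $K_n(\int g;t)$ to $K_n(f;t)$ directly --- you stop at the level of an announcement (``one checks by a direct computation \dots that $K_n(\int g;t)$ is a positive combination / congruence transform of $K_n(f;t)$ together with lower-order pieces that vanish'') and explicitly defer ``the real work''. That deferred computation is the entire content of part $(1)$, so as written the proof has a genuine gap. Moreover the structure you guess at is not quite right: the boundary contribution does not vanish under $f(0)\le 0$ --- it is a nonzero positive semidefinite matrix, namely $-f(0)$ times the Hadamard product of the Cauchy matrix with the rank-one matrix $((-1)^{i+j})$, and it vanishes only when $f(0)=0$ --- and the relation to $K_n(f;t)$ goes through an integral, not a pointwise congruence.

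For the record, the missing identity is elementary: from $f(t)=tg(t)$ one gets $f^{(k)}(t)=tg^{(k)}(t)+kg^{(k-1)}(t)$, hence $t^{k-1}f^{(k)}(t)=\bigl(t^{k}g^{(k-1)}(t)\bigr)'$. Since $(t^{i+j-1})$ is positive semidefinite for $t>0$ (rank one, with entries $t^{i-1/2}\,t^{j-1/2}$), Schur's theorem gives
$\bigl((t^{i+j}g^{(i+j-1)}(t))'/(i+j)!\bigr)=(t^{i+j-1})\circ K_n(f;t)\ge 0$;
integrating entrywise from $0$ to $t$ and using $f(0)\le 0$ to show that the limit of $\bigl(s^{i+j}g^{(i+j-1)}(s)/(i+j)!\bigr)$ as $s\to 0$ is positive semidefinite yields $\bigl(t^{i+j}g^{(i+j-1)}(t)/(i+j)!\bigr)\ge 0$, and stripping the powers of $t$ by congruence with $\mathrm{diag}(t^{-1},\dots,t^{-n})$ (or, as the paper does, by extracting powers of $t$ from every principal minor) gives $K_n(\int g;t)=\bigl(g^{(i+j-1)}(t)/(i+j)!\bigr)\ge 0$. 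Your part $(2)$ is fine once $(1)$ is in place: reading off $g^{(3)}\ge 0$ from the $(2,2)$ entry and applying Lemma~\ref{lem:Lowner} to $g$ is all that is needed and is what the paper does; your detour through $2$-convexity of $\int g$ works but rests on Proposition~\ref{prp:convexToQ_2}$(1)$ rather than on Lemma~\ref{lem:Lowner} ``applied to $\int g$'', and is unnecessary.
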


\vskip 3mm

\begin{proof}

$(1)$: 

Since 
$$
f^{(k)}(t) = tg^{(k)}(t) + kg^{(k-1)}(t)
$$
for $1 \leq k$, we have 
$$
t^{(k-1)}f^{(k)}(t) = (t^{(k)}g^{(k-1)}(t))' 
$$
for $2 \leq k$.

Since $f$ is $n$-convex, 
$K_n(f, t) = (\frac{f^{(i+j)}(t)}{(i+j)!}) \geq 0$. 
From the above calculation
\begin{align*}
(t^{i+j-1})\circ K_n(f, t) &= (\frac{t^{i+j-1}f^{(i+j)}(t)}{(i+j)!}) \\
&= (\frac{(t^{i+j}g^{(i+j-1)}(t))'}{(i+j)!}) \geq 0.
\end{align*}

Hence 
\begin{align*}
0 \leq &\int_0^t(\frac{(u^{i+j}g^{(i+j-1)}(u))'}{(i+j)!})du \\
&= (\frac{t^{i+j}g^{i+j-1}(t)}{(i+j)!}) 
- \lim_{s\rightarrow 0}(\frac{s^{i+j}g^{(i+j-1)}(s)}{(i+j)!}).\\
\end{align*}

Since $f(0) \leq 0$ and 
$((-1)^{i+j})$ is positive semidefinite, we have
\begin{align*}
\lim_{s\rightarrow 0}(\frac{s^{i+j}g^{(i+j-1)}(s)}{(i+j)!}) &=
(\frac{(-1)^{i+j-1}f(0)}{i+j})\\
&= (-f(0))(\frac{(-1)^{i+j}}{i+j}) \\
&= - f(0)(\frac{1}{i+j}) \circ ((-1)^{i+j}) \geq 0
\end{align*}
by Lemma~\ref{lem:Cauchy matrix}, where $\circ$ means the 
Hadamard product.

Therefore $(\frac{t^{i+j}g^{(i+j-1)}(t)}{(i+j)!}) \geq 0$.

Next we will check the determinant of the principal submatrix 
$D(k_1, k_2, \dots, k_r) = (\frac{t^{k_i+k_j}g^{k_i+k_j-1}(t)}{(k_i+k_j)!})$ of $M_n(g, t)$ 
for any $k_1, k_2, \dots, k_r \in \{1, 2, \dots, n\}$ and $1 \leq r \leq n$. 
Note that since $(\frac{t^{i+j}g^{(i+j-1)}(t)}{(i+j)!})$ 
is positive semidefinite,
$\det(D(k_1, k_2, \dots, k_r)) \geq 0$ from the standard theorem in 
linear algebras.

\begin{align*}
\det(D(k_1, k_2, \dots, k_r)) 
&= |(\frac{t^{k_i+k_j}g^{k_i+k_j-1}(t)}{(k_i+k_j)!})|\\
&= t^{2k_1}
\left|
\begin{array}{cccc}
\frac{g^{(2k_1-1)}(t)}{(2k_1)!} &\frac{t^{k_2-k_1}g^{(k_1+k_2-1)}(t)}{(k_1+k_2)!}
&\cdots&\frac{t^{k_r-k_1}g^{k_1+k_r-1}(t)}{(k_1+k_r)!}\\
\frac{t^{k_2+k_1}g^{(k_1+k_2-1)}(t)}{(k_2+k_1)!}&\frac{t^{2k_2}g^{(2k_2-1)}(t)}{(2k_2)!}&\cdots
&\frac{t^{k_2+k_r}g^{(k_2+k_r-1)}(t)}{(k_2+k_r)!}\\
\vdots&\vdots&&\vdots\\
\frac{t^{k_r+k_1}g^{(k_1+k_r-1)}(t)}{(k_1+k_r)!}
&\frac{t^{k_r+k_2}g^{(k_2+k_r-1)}(t)}{(k_2+k_r)!}
&\cdots&\frac{t^{2k_r}g^{(2k_r-1)}(t)}{(2k_r)!}\\
\end{array}
\right|\\
&= t^{2k_1}t^{k_1+k_2}
\left|
\begin{array}{cccc}
\frac{g^{(2k_1-1)}(t)}{(2k_1)!} &\frac{t^{k_2-k_1}g^{(k_1+k_2-1)}(t)}{(k_1+k_2)!}
&\cdots&\frac{t^{k_r-k_1}g^{(k_1+k_r-1)}(t)}{(k_1+k_r)!}\\
\frac{g^{(k_2+k_1-1)}(t)}{(k_1+k_2)!}&\frac{t^{k_2-k_1}g^{(2k_2-1)}(t)}{(2k_2)!}&
&\frac{t^{k_r-k_1}g^{(k_2+k_r-1)}(t)}{(k_2+k_r)!}\\
\vdots&\vdots&&\vdots\\
\frac{t^{k_r+k_1}g^{(k_r+k_1-1)}(t)}{(k_r+k_1)!}
&\frac{t^{k_r+k_2}g^{(k_r+k_2-1)}(t)}{(k_r+k_2)!}
&\cdots&\frac{t^{2k_r}g^{(2k_r-1)}(t)}{(2k_r)!}\\
\end{array}
\right|\\
&= \cdots\\
&= t^{2k_1}t^{k_2+k_1} \cdots t^{k_r+k_1}
\left|
\begin{array}{cccc}
\frac{g^{(2k_1-1)}(t)}{(2k_1)!}&\frac{t^{k_2-k_1}g^{(k_1+k_2-1)}(t)}{(k_1+k_2)!}&
\cdots&\frac{t^{k_r-k_1}g^{(k_1+k_r-1)}(t)}{(k_1+k_r)!}\\
\frac{g^{(k_2+k_1-1)}(t)}{(k_2+k_1)!}&\frac{t^{k_2-k_1}g^{(2k_2-1)}(t)}{(2k_2)!}&\cdots&
\frac{t^{k_r-k_1}g^{(k_2+k_r-1)}(t)}{(k_2+k_r)!}\\
\vdots&&&\\
\frac{g^{(k_r+k_1-1)}(t)}{(k_r+k_1)!}&\frac{t^{k_2-k_1}g^{(k_r+k_2-1)}(t)}{(k_r+k_2)!}
&\cdots&\frac{t^{k_r-k_1}g^{(2k_r-1)}(t)}{(2k_r)!}\\
\end{array}
\right|\\
&= (t^{2k_1}t^{k_2+k_1} \cdots t^{k_r+k_1})t^{k_2-k_1}
\left|
\begin{array}{cccc}
\frac{g^{(2k_1-1)}(t)}{(2k_1)!}&\frac{g^{(k_1+k_2-1)}(t)}{(k_1+k_2)!}&
\cdots&\frac{t^{k_r-k_1}g^{(k_1+k_r-1)}(t)}{(k_1+k_r)!}\\
\frac{g^{(k_2+k_1-1)}(t)}{(k_2+k_1)!}&\frac{g^{(2k_2-1)}(t)}{(2k_2)!}&\cdots&
\frac{t^{k_r-k_1}g^{(k_2+k_r-1)}(t)}{(k_2+k_r)!}\\
\vdots&&&\\
\frac{g^{(k_r+k_1-1)}(t)}{(k_r+k_1)!}&\frac{g^{(k_r+k_2-1)}(t)}{(k_r+k_2)!}
&\cdots&\frac{t^{k_r-k_1}g^{(2k_r-1)}(t)}{(2k_r)!}\\
\end{array}
\right|\\
&=\cdots\\
&= (t^{2k_1}t^{k_2+k_1} \cdots t^{k_r+k_1})t^{k_2-k_1}\times\cdots\times t^{k_r-k_1}
\left|
\begin{array}{cccc}
\frac{g^{(2k_1-1)}(t)}{(2k_1)!}&\frac{g^{(k_1+k_2-1)}(t)}{(k_1+k_2)!}&
\cdots&\frac{g^{(k_1+k_r-1)}(t)}{(k_1+k_r)!}\\
\frac{g^{(k_2+k_1-1)}(t)}{(k_2+k_1)!}&\frac{g^{(2k_2-1)}(t)}{(2k_2)!}&\cdots&
\frac{g^{(k_2+k_r-1)}(t)}{(k_2+k_r)!}\\
\vdots&&&\\
\frac{g^{(k_r+k_1-1)}(t)}{(k_r+k_1)!}&\frac{g^{(k_r+k_2-1)}(t)}{(k_r+k_2)!}
&\cdots&\frac{g^{(2k_r-1)}(t)}{(2k_r)!}\\
\end{array}
\right|\\
\end{align*}
This implies that  $\det((\frac{g^{(k_i+k_j-1)}(t)}{(k_i+k_j)!})) \geq 0$.

Since determinants of all principal submatrices of $(\frac{g^{(i+j-1)}(t)}{(i+j)!})$
is nonnegative, we know that
 $(\frac{g^{(i+j-1)}(t)}{(i+j)!})$ is positive semidefinite, hence, 
so is $(\frac{(\int g)^{(i+j)}(t)}{(i+j)!})$. 

$(2)$
From the observation of $(1)$ we have





$$
\left(
\begin{array}{cc}
\frac{1}{2}g'(t)& \frac{1}{6}g^{(2)}(t)\\
\frac{1}{6}g^{(2)}(t)& \frac{1}{24}g^{(3)}(t)
\end{array}
\right)
$$
is positive semidefinite for $t$ in $(0, \alpha)$.

Hence $g^{(3)}(t) \geq 0$ for 
$t \in (0, \alpha)$. 
From Lemma~\ref{lem:Lowner}
we conclude that $g$ belongs to the class $Q_2(0, \alpha)$.
\end{proof}

\vskip 3mm

\begin{rmk}
From Proposition~\ref{prp:2-convex} if $f$ is $k$-convex, we have 
$(\frac{(\int g)^{(i+j)}}{(i+j)!})$ is positive semidefinite. Hence
if the local property theorem for the $k$-convexity is true, then 
we could conclude that $\int g$ is $k$-convex in $(0, \alpha)$.
\hfill$\qed$
\end{rmk}

\vskip 3mm

\begin{rmk}
In Propositions~\ref{prp:2-convex} and \cite[Proposition~2.7]{OJ} 
we can not drop the condition $f(0) \leq 0$.
Indeed, consider $h(t) = - \log(t + 1) + 1$ for $[0, 1)$. 
Then $h$ is 2-convex from 
$$
\det(K_2(h;t)) = \frac{1}{72(t + 1)^6}.
$$

On the contrary, from the similar calculation in 
 Example~\ref{exa:(1)to(3)}.

we have 
\begin{align*} 
(\frac{h(t)}{t})' &= \frac{-t + (t + 1)\log(t + 1) - t^2(t + 1)}{t^2(t + 1)}\\
(\frac{h(t)}{t})^{(2)} &= - \frac{- 3t^2 - 2t + 2t^2\log(t + 1) + 4\log(t + 1) + 2\log(t + 1) 
+ 2(t + 1)^2}{(t + 1)^2t^3}\\
(\frac{h(t)}{t})^{(3)} &= \frac{- 11t^3 - 15t^2 - 6t + 6t^3\log(t + 1) + 18t^2\log(t + 1) + 18t\log(t + 1) 
+ 6\log(t + 1) - 6(t + 1)^3}{(t + 1)^3t^4}\\
\end{align*}
We have then $(\frac{h(t)}{t})^{(3)} < 0$ for some $t \in (0, 1)$.
Hence $(\frac{h(t)}{t})'$ is not convex, that is, 
$\frac{h(t)}{t}$ does not belong to $Q_2(0,1)$ by Lemma~\ref{lem:Lowner}. 
In particular, $\frac{h(t)}{t} \notin P_2(0, 1)$.
\hfill$\qed$
\end{rmk}

\vskip 3mm

\begin{prp}\label{prp:matrix convex}
Let $\alpha > 0$ and $f$ be a $C^2$-function on $[0, \alpha)$.
Let $f(t) = tg(t)$. Then $g$ is a $C^2$-function and 
for any distinct $t_1, t_2, \dots t_n \in (0, \alpha)$ we have
\begin{align*}
[t_1, t_i, t_j]_f = t_1[t_1,t_i,t_j]_g + [t_i,t_j]_g \quad (1 \leq i, j \leq n). 
\end{align*}

Therefore, if $g$ is $n$-monotone and $n$-convex, then $f$ is $n$-convex. 
\end{prp}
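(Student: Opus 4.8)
The plan is to prove the divided difference identity $[t_1,t_i,t_j]_f = t_1[t_1,t_i,t_j]_g + [t_i,t_j]_g$ first, and then feed it into the Kraus criterion $(IIa)$ from Proposition~\ref{Criterion}. For the identity, I would work directly from the definition of divided differences. Writing $f(t)=tg(t)$, the first divided difference is
\[
[s,t]_f = \frac{sg(s)-tg(t)}{s-t} = \frac{sg(s)-sg(t)+sg(t)-tg(t)}{s-t} = s\,[s,t]_g + g(t),
\]
and by symmetry also $= t\,[s,t]_g + g(s)$. Then applying the inductive definition of the second divided difference to $[t_1,t_i,t_j]_f = \frac{[t_1,t_i]_f - [t_i,t_j]_f}{t_1-t_j}$, substituting $[t_1,t_i]_f = t_1[t_1,t_i]_g + g(t_i)$ and $[t_i,t_j]_f = t_j[t_i,t_j]_g + g(t_i)$ (choosing the representations so the $g(t_i)$ terms cancel), I would get
\[
[t_1,t_i,t_j]_f = \frac{t_1[t_1,t_i]_g - t_j[t_i,t_j]_g}{t_1-t_j}.
\]
A small algebraic rearrangement — adding and subtracting $t_1[t_i,t_j]_g$ in the numerator — splits this into $t_1\cdot\frac{[t_1,t_i]_g-[t_i,t_j]_g}{t_1-t_j} + \frac{(t_1-t_j)[t_i,t_j]_g}{t_1-t_j} = t_1[t_1,t_i,t_j]_g + [t_i,t_j]_g$, which is the claim. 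The $C^2$ regularity of $g$ on $[0,\alpha)$ follows since $g(t)=f(t)/t$ away from $0$ and $f$ is $C^2$; near $0$ one uses that $f(t)=tg(t)$ forces $g$ to extend smoothly (or one simply notes the statement only needs $g$ to be $C^2$ on the relevant open interval for the divided-difference manipulations, with the matrix inequalities then passing to the closed interval by continuity and the regularization remarks of Section~2).

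For the final implication, suppose $g$ is both $n$-monotone and $n$-convex on $(0,\alpha)$. Fix $t_1,\dots,t_n \in (0,\alpha)$ distinct (the general case follows by the continuity/regularization argument). By the matrix identity just established,
\[
\bigl([t_1,t_i,t_j]_f\bigr)_{i,j} = t_1\bigl([t_1,t_i,t_j]_g\bigr)_{i,j} + \bigl([t_i,t_j]_g\bigr)_{i,j}.
\]
The first matrix on the right is positive semidefinite because $g \in K_n(0,\alpha)$, by the Kraus criterion $(IIa)$; the scalar $t_1 > 0$ preserves this. The second matrix is positive semidefinite because $g \in P_n(0,\alpha)$, by the Loewner criterion $(Ia)$. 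A sum of positive semidefinite matrices is positive semidefinite, so $\bigl([t_1,t_i,t_j]_f\bigr) \geq 0$ for every choice of points, and by $(IIa)$ again we conclude $f \in K_n(0,\alpha)$; extending to $[0,\alpha)$ uses continuity of $f$ at the endpoint.

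I do not expect a serious obstacle here — the result is essentially a bookkeeping lemma once one has the right decomposition of the divided difference. The one point requiring a little care is the positivity of the endpoint term and whether we may assume the $t_i$ distinct: the Kraus criterion as stated in Proposition~\ref{Criterion} already allows repeated points (with the divided differences interpreted via derivatives), so one can either invoke it in that form or argue by a limiting/regularization argument as set up in Section~2. Another minor subtlety is ensuring $t_1$ (the "pivot" point in Kraus's criterion) is itself taken in $(0,\alpha)$ so that $t_1>0$; since the criterion permits any fixed pivot $t_k$, this is harmless. Beyond that, the proof is just the two criteria plus the additivity of positive semidefiniteness.
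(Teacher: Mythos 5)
Your proposal is correct and follows essentially the same route as the paper: you establish the identity $[t_1,t_i,t_j]_f = t_1[t_1,t_i,t_j]_g + [t_i,t_j]_g$ by the same add-and-subtract manipulation of divided differences (merely packaged as a first-order identity $[s,t]_f = s[s,t]_g + g(t)$ before assembling the second-order one), and then conclude exactly as the paper does by writing $([t_1,t_i,t_j]_f)$ as $t_1$ times a Kraus matrix of $g$ plus a Loewner matrix of $g$, both positive semidefinite. Your added remarks on the positivity of the pivot $t_1$, the regularity of $g$, and the citation of Kraus rather than Loewner for the convexity criterion are all sound refinements of points the paper leaves implicit.
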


\vskip 3mm

\begin{proof}
For any distinct $t_1, t_2, \dots t_n \in (0, \alpha)$ we have
\begin{align*}
[t_1,t_i,t_j]_f &= \frac{[t_1,t_i]_f - [t_i,t_j]_f}{t_1 - t_j}\\
&= \frac{\frac{f(t_1) - f(t_i)}{t_1 - t_i} - \frac{f(t_i) - f(t_j)}{t_i - t_j}}{t_1 - t_j}\\
&= \frac{\frac{t_1g(t_1) - t_ig(t_i)}{t_1 - t_i} - \frac{t_ig(t_i) - t_jg(t_j)}{t_i - t_j}}{t_1 - t_j}\\
&= \frac{\frac{t_1(g(t_1) - g(t_i)) + (t_1 - t_i)g(t_i)}{t_1 - t_i} - 
\frac{(t_i - t_j)g(t_i) - t_j(g(t_j) - g(t_i))}{t_i - t_j}}{t_1 - t_j}\\
&= \frac{\frac{t_1(g(t_1) - g(t_i))}{t_1 - t_i} + g(t_i) - g(t_i) + 
\frac{t_j(g(t_j) - g(t_i))}{t_i - t_j}}{t_1 - t_j}\\
&= \frac{t_1\left\{\frac{g(t_1) - g(t_i)}{t_1 - t_i} - \frac{g(t_i) - g(t_j)}{t_i - t_j}\right\} 
+ t_1\frac{g(t_i) - g(t_j)}{t_i - t_j} - t_j\frac{g(t_i) - g(t_j)}{t_i - t_j}}{t_1 - t_j}\\
&= \frac{t_1\left\{\frac{g(t_1) - g(t_i)}{t_1 - t_i} - \frac{g(t_i) - g(t_j)}{t_i - t_j}\right\}}{t_1 - t_j}
+ \frac{(t_1 - t_j)\frac{g(t_i) - g(t_j)}{t_i - t_j}}{t_1 - t_j}\\
&= t_1[t_1, t_i,t_j]_g + [t_i,t_j]_g
\end{align*}

Suppose that $g$ is $n$-monotone and $n$-convex.
Since $g$ is $n$-monotone, the correspondent 
Loewner matrix $([t_i,t_j]_g)$
is positive semidefinite by \cite{L}. 
Similarily, since $g$ is $n$-convex,  
$([t_1,t_i,t_j]_g)$ is positive semidefinite by \cite{L}.

Therefore, from the above estimate we have 
for any distinct $t_1, t_2, \dots, t_n$ in $(0, \alpha)$
\begin{align*}
([t_1, t_i, t_j]_f)
&= t_1([t_1, t_i,t_j]_g) + ([t_i,t_j]_g)\\
&\geq 0.
\end{align*}
Hence $f$ is $n$-convex by \cite{L}.
\end{proof}

\vskip 3mm

\begin{rmk}
When $f$ is $n$-convex with $f(0) \leq 0$, we can give another proof of \cite[Theorem~2.2]{OJ} 
(that is, $g(t) = \frac{f(t)}{t}$ is $(n - 1)$-positive) 
using Proposition~\ref{prp:matrix convex} as follows:

For any distinct $t_1, t_2, \dots, t_{n-1}, t_n \in (0, \alpha)$
we have 
\begin{align*}
([t_i,t_j]_g)_{1\leq i, j \leq n-1} 
&= ([t_n,t_i,t_j]_f)_{1 \leq i, j \leq n - 1} - (t_n[t_n, t_i, t_j]_g)_{1\leq i, j \leq n - 1} \\
&= ([t_n,t_i,t_j]_f)_{1 \leq i, j \leq n - 1}
- 
(t_n  \frac{1}{(t_n - t_j)}
\{\frac{g(t_n)-g(t_i)}{t_n - t_i} - \frac{g(t_i)-g(t_j)}{t_i - t_j}\})_{1\leq i, j, \leq n-1} 
\\
&= ([t_n,t_i,t_j]_f)_{1 \leq i, j \leq n - 1}
-
(\frac{1}{(t_n - t_j)}
\{t_n\frac{g(t_n)-g(t_i)}{t_n - t_i} - t_n\frac{g(t_i)-g(t_j)}{t_i - t_j}\})_{1\leq i, j, \leq n-1} \\
&= ([t_n,t_i,t_j]_f)_{1 \leq i, j \leq n - 1}
-
(\frac{1}{(t_n - t_j)}\{\frac{f(t_n) - t_ng(t_i)}{t_n - t_i} - t_n 
\frac{g(t_i)-g(t_j)}{t_i-t_j}\})_{1 \leq i, j \leq n - 1} \\
&\geq  - (\frac{1}{(t_n - t_j)}\{ \frac{f(t_n) - t_ng(t_i)}{t_n - t_i} - t_n 
\frac{g(t_i)-g(t_j)}{t_i-t_j}\})_{1 \leq i, j \leq n - 1}
\ (t \in (0, \alpha))
\end{align*}

Note that since $([t_n,t_i,t_j]_f)_{1\leq i, j \leq n}$ is positive semidefinite, 
$([t_n,t_i,t_j]_f)_{1\leq i, j \leq n-1}$ is positive semidefinite.

When $t_n \rightarrow 0$, we get 
\begin{align*}
M_{n-1}(g;t) 
&\geq (\frac{-f(0)}{t_it_j}) \geq 0.
\end{align*}
Hence $g$ is $(n - 1)$-monotone by \cite{L}.
\end{rmk}

\vskip 3mm

In \cite[Proposition 2.6]{OJ} the authors showed that 
for any $2$-convex polynomial $f$ with the degree less than or equal to 5 
on $[0, \alpha)$
$g(t) = \frac{f(t)}{t}$ is $2$-monotone. 

\vskip 3mm


\vskip 3mm 

We have another affirmative example for the implication from $(1)$ to $(3)$.

\vskip 3mm

\begin{exa}\label{exa:(1)to(3)}
Let $f(t) = - \log(t + 1)$ for $[0, 1)$. Then 
$f$ is $2$-convex. 
Indeed, since for any $n \in \N$ $f^{(n)}( t) = (- 1)^n\frac{(n - 1)!}{(t + 1)^n}$, 
we have 
\begin{align*}
K_2(f, t) &= (\frac{f^{(i+j)}(t)}{(i + j)!})\\
\det(K_2(f, t)) &= \frac{1}{72(t + 1)^6}.
\end{align*}
Since $f^{(2)}(t) \geq 0$, $f^{(4)}(t) \geq 0$,  and $\det(K_2(f,t)) \geq 0$ for $t \in [0, 1)$,  
$K_2(f, t)$ is positive semidefinite for $t \in [0, 1)$. 
Therefore, $f$ is $2$-convex by \cite{HT}. 

Let $g(t) = \frac{f(t)}{t} = - \frac{\log(t + 1)}{t}$. Then we will show that 
$g$ is $2$-monotone on $(0, 1)$.
We have, then,  

\begin{align*}
g'(t) &= \frac{-t + (t + 1)\log(t + 1)}{t^2(t + 1)}\\
g^{(2)}(t) &= - \frac{- 3t^2 - 2t + 2t^2\log(t + 1) + 4t\log(t + 1) + 2\log(t + 1)}{(t + 1)^2t^3}\\
g^{(3)}(t) &= \frac{- 11t^3 - 15t^2 - 6t + 6t^3\log(t + 1) + 18t^2\log(t + 1) + 18t\log(t + 1) + 6\log(t + 1)}{(t + 1)^3t^4}\\
\det(M_2(g, t)) &= 12 \frac{- 5t^2 - 6t + 2t^2\log(t + 1) + 8t \log(t + 1) + 6\log(t + 1)}{t^4(t + 1)^4}.
\end{align*}

From the simple calculations we conclude that all $g', g^{(3)}, \det(M_2(g, t))$ are nonnegative.
Note that $(t+1)^3t^4g^{(3)}(t) = \frac{1}{10}t^6 - \frac{3}{10}t^5 + \frac{3}{2}t^4 + O(t^7)$
and $t^4(t+1)^4\det(M_2(g,t)) = \frac{1}{6}t^4 - \frac{2}{15}t^5 + \frac{1}{10}t^6 + O(t^7)$ by 
using the Maclaurin series of $\log(t + 1)$.

Hence $M_2(g, t)$ is positive semidefinite. Therefore, $g$ is $2$-monotone.
\end{exa}

Generally,  however, even if $n = 2$, 
the assertion $(1)$ does not necesarily imply the assertion $(3)$. 

We need the following simple observation.

\vskip 3mm

\begin{lem}\label{lem:parallelism movement}
For $\alpha, \beta > 0$ we define a function $h \colon [0, \alpha) \rightarrow [0,\beta)$
by $h(t) = \frac{\beta}{\alpha}t$. Then
\begin{enumerate}
\item
If $f \in K_n([0, \alpha))$, then $f \circ h^{-1} \in K_n([0,\beta))$.
\item
If $\frac{f(t)}{t} \in M_n((0, \alpha))$, then $\frac{(f\circ h^{-1})(t)}{t} \in M_n((0, \beta))$.
\end{enumerate}
\end{lem}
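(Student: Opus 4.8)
The plan is to reduce both parts to the elementary observation that $n$-convexity and $n$-monotonicity on an interval are preserved under a linear rescaling of the variable (together with, for $n$-monotonicity, multiplication by a positive constant, which manifestly preserves the order $\le$ on selfadjoint matrices). Here $h^{-1}\colon[0,\beta)\to[0,\alpha)$ is given by $h^{-1}(s)=\frac{\alpha}{\beta}s$, so that $(f\circ h^{-1})(s)=f(\frac{\alpha}{\beta}s)$, and by the functional calculus $(f\circ h^{-1})(A)=f(\frac{\alpha}{\beta}A)$ for every selfadjoint $A$. The two facts to keep in mind are: if $A$ is selfadjoint with spectrum in $[0,\beta)$ (resp. $(0,\beta)$), then $\frac{\alpha}{\beta}A$ is selfadjoint with spectrum in $[0,\alpha)$ (resp. $(0,\alpha)$); and the map $A\mapsto\frac{\alpha}{\beta}A$ is an order isomorphism on selfadjoint matrices that commutes with convex combinations.

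For (1), take selfadjoint $A,B\in M_n$ with spectra in $[0,\beta)$ and $\lambda\in[0,1]$. Since $\lambda A+(1-\lambda)B$ also has spectrum in $[0,\beta)$ and the matrices $\frac{\alpha}{\beta}A,\frac{\alpha}{\beta}B$ have spectra in $[0,\alpha)$, the hypothesis $f\in K_n([0,\alpha))$ gives
$$
(f\circ h^{-1})\bigl(\lambda A+(1-\lambda)B\bigr)=f\!\left(\lambda\tfrac{\alpha}{\beta}A+(1-\lambda)\tfrac{\alpha}{\beta}B\right)\le\lambda f\!\left(\tfrac{\alpha}{\beta}A\right)+(1-\lambda)f\!\left(\tfrac{\alpha}{\beta}B\right)=\lambda(f\circ h^{-1})(A)+(1-\lambda)(f\circ h^{-1})(B),
$$
which is exactly the statement that $f\circ h^{-1}\in K_n([0,\beta))$.

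For (2), the point is the identity, valid for $s\in(0,\beta)$,
$$
\frac{(f\circ h^{-1})(s)}{s}=\frac{f(\tfrac{\alpha}{\beta}s)}{s}=\frac{\alpha}{\beta}\cdot\frac{f(\tfrac{\alpha}{\beta}s)}{\tfrac{\alpha}{\beta}s}=\frac{\alpha}{\beta}\,g\!\left(\tfrac{\alpha}{\beta}s\right),\qquad g(t)=\frac{f(t)}{t},
$$
so that $s\mapsto\frac{(f\circ h^{-1})(s)}{s}$ is $\frac{\alpha}{\beta}(g\circ h^{-1})$. Given selfadjoint $A\le B$ with spectra in $(0,\beta)$, we have $\frac{\alpha}{\beta}A\le\frac{\alpha}{\beta}B$ with spectra in $(0,\alpha)$; applying the hypothesis $g\in M_n((0,\alpha))$ yields $g(\frac{\alpha}{\beta}A)\le g(\frac{\alpha}{\beta}B)$, and multiplying by the positive scalar $\frac{\alpha}{\beta}$ preserves this inequality, whence $\frac{(f\circ h^{-1})(t)}{t}\in M_n((0,\beta))$. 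There is no genuine obstacle in this argument; the only care needed is the routine bookkeeping with the functional calculus, the verification that the relevant spectra stay inside the correct intervals, and keeping track of the harmless factor $\frac{\alpha}{\beta}$ in part (2) (one could alternatively run everything through the divided-difference / Loewner-matrix criteria of Proposition~\ref{Criterion}, but the definitional argument above is shorter).
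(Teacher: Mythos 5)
Your proof is correct and follows essentially the same route as the paper: both parts are verified directly from the definitions by rescaling $A,B$ to $\frac{\alpha}{\beta}A,\frac{\alpha}{\beta}B$, checking the spectra land in $[0,\alpha)$ (resp.\ $(0,\alpha)$), and, in part (2), absorbing the harmless positive factor $\frac{\alpha}{\beta}$. The only cosmetic difference is that the paper writes the part (2) computation in terms of $A^{-1}f(A)$ rather than your explicit identity $\frac{(f\circ h^{-1})(s)}{s}=\frac{\alpha}{\beta}\,g\bigl(\frac{\alpha}{\beta}s\bigr)$, which is the same thing under the functional calculus.
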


\begin{proof}
$(1)$: 
Let $A, B \in M_n(\C)$ with $0 \leq A, B \leq \beta I_n$, where $I_n$ is the identity matrix 
in $M_n(\C)$. Take for any $\lambda \in [0, 1]$. 
Since $f \in K_n([0, \alpha))$ and 
$0 \leq \frac{\alpha}{\beta}A, \frac{\alpha}{\beta}B \leq \alpha I_n$, we have 
\begin{align*}
f(\lambda \frac{\alpha}{\beta}A + (1 - \lambda)\frac{\alpha}{\beta}B)
&\leq \lambda f(\frac{\alpha}{\beta}A) + (1 - \lambda)f(\frac{\alpha}{\beta}B)\\
f(\frac{\alpha}{\beta}(\lambda A + (1 - \lambda)B) &\leq 
\lambda f(\frac{\alpha}{\beta}A) + (1 - \lambda)f(\frac{\alpha}{\beta}B)\\
(f \circ h^{-1})(\lambda A + (1 - \lambda)B) 
&\leq \lambda (f \circ h^{-1})(A) + (1 - \lambda)(f \circ h^{-1})(B)
\end{align*}

Hence $f \circ h^{-1} \in K_n([0, \beta))$.

$(2)$:
Let $A, B \in M_n(\C)$ with $0 \leq A \leq B \leq \beta I_n$. 
Since $\frac{f(t)}{t} \in M_n((0, \alpha))$ and 
$0 \leq \frac{\alpha}{\beta}A \leq \frac{\alpha}{\beta}B \leq \alpha I_n$, 
we have 
\begin{align*}
&0 \leq (\frac{\alpha}{\beta}A)^{-1}f(\frac{\alpha}{\beta}A) 
\leq (\frac{\alpha}{\beta}B)^{-1}f(\frac{\alpha}{\beta}B) \\
&0 \leq \frac{\beta}{\alpha}A^{-1}f(\frac{\alpha}{\beta}A) 
\leq \frac{\beta}{\alpha}B^{-1}f(\frac{\alpha}{\beta}B)\\
&0 \leq \frac{\beta}{\alpha}A^{-1}(f\circ h^{-1})(A) 
\leq \frac{\beta}{\alpha}B^{-1}(f\circ h^{-1})(B)\\
&0 \leq A^{-1}(f\circ h^{-1})(A) \leq B^{-1}(f \circ h^{-1})(B).
\end{align*}

Hence $\frac{(f \circ h^{-1})(t)}{t} \in M_n((0, \beta))$.
\end{proof}

\vskip 3mm

\begin{thm}\label{thm:counterexample (1)->(3)}
 For any $\alpha > 0$ 
there is a $2$-convex function $f$ on $[0, \alpha)$, 
but $g(t) = \frac{f(t)}{t}$ is not $2$-monotone on $(0, \alpha)$.
\end{thm}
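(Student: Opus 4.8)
The plan is to produce one explicit counterexample on the interval $[0,1)$ and then transport it to an arbitrary $\alpha > 0$ using the scaling Lemma~\ref{lem:parallelism movement}. Concretely, suppose $f \in K_2([0,1))$ has been found with $g(t) := f(t)/t \notin P_2(0,1)$, and put $f_\alpha(t) := f(t/\alpha)$ on $[0,\alpha)$. Applying part $(1)$ of Lemma~\ref{lem:parallelism movement} to the dilation of $[0,1)$ onto $[0,\alpha)$ yields $f_\alpha \in K_2([0,\alpha))$. If $f_\alpha(t)/t$ were $2$-monotone on $(0,\alpha)$, then part $(2)$ of the same lemma, applied to the reverse dilation $k$ of $[0,\alpha)$ onto $[0,1)$ (so $k^{-1}(s) = \alpha s$), would make $(f_\alpha\circ k^{-1})(s)/s$ $2$-monotone on $(0,1)$; but $f_\alpha(k^{-1}(s)) = f(s)$, so this says exactly that $g$ is $2$-monotone on $(0,1)$, contradicting the choice of $f$. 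Hence it suffices to treat the single value $\alpha = 1$.

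For $\alpha = 1$ the desired function is the one already appearing in the Remark following Proposition~\ref{prp:2-convex}, namely $f(t) = -\log(t+1) + 1$ on $[0,1)$. Since $f^{(k)}(t) = (-1)^k (k-1)!/(t+1)^k$ for $k \geq 1$, a constant shift leaves $K_2(\,\cdot\,;t)$ unchanged, and one has $f^{(2)}(t) \geq 0$, $f^{(4)}(t) \geq 0$ and $\det K_2(f;t) = 1/(72(t+1)^6) > 0$ on $[0,1)$; thus $K_2(f;t)$ is positive definite on $[0,1)$, so $f \in K_2([0,1))$ by the $n=2$ case of part $(IIb)$ of Proposition~\ref{Criterion}. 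On the other hand, the explicit formula for $g^{(3)}(t)$ with $g(t) = f(t)/t$ recorded in that Remark shows $g^{(3)}(t) < 0$ for some $t \in (0,1)$, so $g' = (f/t)'$ is not convex on $(0,1)$; by Lemma~\ref{lem:Lowner} this gives $g \notin Q_2(0,1)$, and since $P_2(0,1) \subseteq Q_2(0,1)$ we conclude $g \notin P_2(0,1)$, i.e. $g$ is not $2$-monotone on $(0,1)$. Combined with the reduction of the previous paragraph, this proves the theorem.

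I expect the only genuine subtlety to be the bookkeeping in the reduction step: Lemma~\ref{lem:parallelism movement} is stated as a one-directional implication, so for the non-monotonicity half one must run part $(2)$ backwards along the inverse dilation and verify that the composition returns the original $g$ (and that $f_\alpha(0) = 1 > 0$ is harmless). The quantitative heart of the argument — the sign change of $g^{(3)}$ on $(0,1)$ — is already carried out in the cited Remark, and it is precisely the point where the hypothesis $f(0) = 1 > 0$ is used: for $f(0) = 0$, as in Example~\ref{exa:(1)to(3)}, the function $f(t)/t$ is in fact $2$-monotone, so a strictly positive value at $0$ is what breaks the implication $(1) \Rightarrow (3)$.
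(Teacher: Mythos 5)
Your scaling reduction via Lemma~\ref{lem:parallelism movement} is sound and is exactly the device the paper uses, but the counterexample you feed into it misses the point of the theorem. The function $f(t)=-\log(t+1)+1$ has $f(0)=1>0$, so it does not satisfy assertion $(1)_2$, which in this paper is ``$f$ is $2$-convex \emph{with} $f(0)\leq 0$''. The theorem is introduced by the sentence ``Generally, however, even if $n=2$, the assertion $(1)$ does not necessarily imply the assertion $(3)$'' and is labelled as the counterexample to $(1)\Rightarrow(3)$; an example whose hypothesis $(1)_2$ already fails cannot refute that implication. Indeed the phenomenon you exhibit is precisely the content of the Remark you are quoting (the one following Proposition~\ref{prp:2-convex}), whose stated purpose is the different and much easier observation that the hypothesis $f(0)\leq 0$ cannot be dropped from Proposition~\ref{prp:2-convex}. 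Reproducing that remark does not prove Theorem~\ref{thm:counterexample (1)->(3)} in the sense the paper intends.

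Your closing diagnosis --- that ``a strictly positive value at $0$ is what breaks the implication'' and that for $f(0)=0$ the quotient $f(t)/t$ is in fact $2$-monotone --- is false, and the paper's own proof is the refutation: it takes $f(t)=t+\tfrac12 t^2+\tfrac13 t^3-\log(t+1)$, which satisfies $f(0)=0$, verifies $K_2(f;t)>0$ on a small interval $[0,\beta)$ (e.g.\ $\beta=0.1$), and then computes $\det M_2(g;t)<0$ for small $t>0$ (using the asymptotics forced by $t>\log(t+1)$), so $g=f/t$ fails to be $2$-monotone even though $f(0)=0$. That is the actual content of the theorem, and it is the quantitative step your proposal omits entirely. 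To repair your argument you must replace your $f$ by a $2$-convex function with $f(0)\leq 0$ and then establish the failure of $2$-monotonicity of $f/t$ directly (your route through $Q_2$ and Lemma~\ref{lem:Lowner} is legitimate in principle, since $P_2\subset Q_2$, but it will not work for this purpose with a function obtained by adding a positive constant).
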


\begin{proof}
Let $f(t) =  t + \frac{1}{2}t^2 + \frac{1}{3}t^3 - \log(t + 1)$ 
and $g(t) = \frac{f(t)}{t} = 1 + \frac{1}{2}t + \frac{1}{3}t^2 - \frac{\log(t + 1)}{t}$. 

Since 
\begin{align*}
K_2(f, t) 
&= 
\left(\begin{array}{cc}
\frac{1}{2} + t + \frac{1}{2(t + 1)^2}&\frac{1}{3} - \frac{1}{3(t + 1)^3}\\
\frac{1}{3} - \frac{1}{3(t + 1)^3}& \frac{1}{4(t + 1)^4}
\end{array}
\right)\\
\det(K_2(f,t)) &= 
- \frac{1}{72}\frac{1}{(t + 1)^6}(27t^2 - 36t - 18 + 126t^3 + 
8t^6 + 48t^5 + 120t^4), \\
\end{align*}
$K_2(f, t)$ is positive definite for some $[0, \beta)$ $(\alpha > 0)$. 
For example $\beta = 0.1$.

On the contrary,

\begin{align*}
M_2(g,t) &= 
\left(\begin{array}{cc}
\frac{1}{2} + \frac{2}{3}t - \frac{1}{(t+1)t} + \frac{\log(t + 1)}{t^2}&
\frac{1}{3} + \frac{1}{2(t + 1)^2t} + \frac{1}{(t + 1)t^2} - \frac{\log(t + 1)}{t^3}\\
\frac{1}{3} + \frac{1}{2(t + 1)^2t} + \frac{1}{(t + 1)t^2} - \frac{\log(t + 1)}{t^3}&
- \frac{1}{3(t + 1)^3t} - \frac{1}{2(t + 1)^2t^2} - \frac{1}{(t + 1)t^3} + \frac{\log(t + 1)}{t^4}
\end{array}
\right)\\
\det(M_2(g,t)) &= \frac{1}{36}\frac{1}{t^4(t + 1)^4}
\{- 4t^8 - 16t^7 - 24t^6 - 96t^5 - 237t^4 - 246t^3 - 126t^2 - 36t \\
&+ 48t^5\log(t + 1) + 210t^4\log(t + 1) + 360t^3\log(t + 1) + 306t^2\log(t + 1) 
+ 144t\log(t + 1) + 36\log(t + 1)\}\\
\end{align*}
Since $t > \log(t + 1)$, there must exist a sufficiently small $t_0 \in (0, \beta)$ such that 
$\det(M_2(g,t_0)) < 0$. 
For example $\det(M_2(g,t))(1.0\times 10^{-9}) = -2.7777778682\times 10^{17}$ 
using Maple.

Therefore, $f$ is $2$-convex on $[0, \beta)$, but $g$ is not $2$-monotone on $(0, \beta)$.

Let $h\colon [0, \alpha) \rightarrow [0, \beta)$  by $h(t) = \frac{\beta}{\alpha}t$
and set $F = f \circ h$. Then $F \in K_2([0, \alpha)$. 
Suppose that $\frac{F(t)}{t}$ is $2$-monotone in $(0, \alpha)$. 
Then by Lemma~\ref{lem:parallelism movement}$(2)$ $\frac{f(t)}{t} (= g(t))$ is $2$-monotone on $(0, \beta)$. 
This is a contradiction. Hence $\frac{F(t)}{t} \notin M_2((0, \alpha))$.
\end{proof}




\vskip 3mm

Next we give an affirmative answer for the convex implication 
from $(3)_n$ to $(1)_n$.

\vskip 3mm

\begin{lem}\label{lem:divided}
Let $f$ be a $C^2$ function on some interval $[0, \alpha)$.
Let take $t \in [0, \alpha)$ and consider a function $[t, z]_f$.
Then
$$
\frac{d}{dz}[t, z]_f = [t, z, z]_f.
$$
\end{lem}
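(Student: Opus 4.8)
The statement to prove is an entirely elementary identity: for a $C^2$ function $f$ on $[0,\alpha)$ and a fixed $t$, the function $z \mapsto [t,z]_f$ is differentiable in $z$ with $\frac{d}{dz}[t,z]_f = [t,z,z]_f$. The natural approach is to split into the two cases dictated by the definition of divided differences: the generic case $z \neq t$, where $[t,z]_f = \frac{f(t)-f(z)}{t-z}$ is a quotient of smooth functions, and the diagonal case $z = t$, where $[t,t]_f = f'(t)$ and one must show the derivative still equals $[t,t,t]_f = \frac{f''(t)}{2}$.

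\textbf{Case $z \neq t$.} Here I would simply differentiate the quotient $\frac{f(t)-f(z)}{t-z}$ with respect to $z$, treating $t$ as a constant. The quotient rule gives
\begin{align*}
\frac{d}{dz}[t,z]_f &= \frac{d}{dz}\frac{f(t)-f(z)}{t-z}
= \frac{-f'(z)(t-z) - (f(t)-f(z))(-1)}{(t-z)^2}\\
&= \frac{f(t) - f(z) - f'(z)(t-z)}{(t-z)^2}.
\end{align*}
Then I would verify that this last expression is exactly $[t,z,z]_f$. By the permutation-invariance of divided differences (noted in the Preliminary section) and the inductive definition, $[t,z,z]_f = [z,z,t]_f = \frac{[z,z]_f - [z,t]_f}{z-t} = \frac{f'(z) - \frac{f(z)-f(t)}{z-t}}{z-t} = \frac{f'(z)(z-t) - (f(z)-f(t))}{(z-t)^2}$, which coincides with the expression above since $(z-t)^2 = (t-z)^2$. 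This step is routine algebra.

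\textbf{Case $z = t$.} This is the only place requiring a limiting argument, and is the mild obstacle. I would compute $\frac{d}{dz}[t,z]_f\big|_{z=t}$ from the difference quotient: $\lim_{h\to 0}\frac{[t,t+h]_f - [t,t]_f}{h} = \lim_{h\to 0}\frac{1}{h}\left(\frac{f(t+h)-f(t)}{h} - f'(t)\right)$. Using the second-order Taylor expansion $f(t+h) = f(t) + f'(t)h + \frac{1}{2}f''(t)h^2 + o(h^2)$, valid since $f \in C^2$, the bracket becomes $\frac{1}{2}f''(t)h + o(h)$, so dividing by $h$ and letting $h \to 0$ gives $\frac{1}{2}f''(t) = [t,t,t]_f$. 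Combined with the continuity of $z \mapsto [t,z,z]_f$ (again from $f \in C^2$, which makes the formula from the first case extend continuously), this shows the identity holds at $z=t$ as well and that $\frac{d}{dz}[t,z]_f$ is itself continuous. Since there is essentially nothing deep here, the only care needed is to keep the $C^2$ hypothesis in play to justify the Taylor expansion and the continuity of the one-sided formula as $z \to t$.
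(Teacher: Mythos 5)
Your proof is correct and is precisely the "direct computation" that the paper's one-line proof alludes to, carried out in full (quotient rule off the diagonal, Taylor expansion at $z=t$). No discrepancies.
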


\vskip 3mm

\begin{proof}
Direct computation.
\end{proof}

\vskip 3mm

\begin{thm}\label{thm:Q_n}
Let $f \in \Q_n([0, \alpha))$. 
Suppose that $\frac{f(t) - f(0)}{t}$ is $(n - 1)$-monotone on $(0, \alpha)$.
Then $f$ is $(n - 1)$-convex. 
In particular, if $f(0) = 0$, then $(n - 1)$-monotonicity of $\frac{f(t)}{t}$ 
implies $(n - 1)$-convexity of $f$.
\end{thm}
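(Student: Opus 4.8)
The plan is to reduce everything to the case $f(0)=0$ and then exploit the characterization of $Q_n$ in Lemma~\ref{lem:Qn} together with the divided-difference identity in Proposition~\ref{prp:matrix convex}. First I would observe that replacing $f$ by $f-f(0)$ changes neither membership in $Q_n([0,\alpha))$ (divided differences of order $\geq 2$, hence the Lowner matrices $([\lambda_i,\lambda_j]_f)$, are unaffected by adding a constant, and even the first divided difference of a constant is $0$, so the almost-positivity condition is untouched) nor the function $\frac{f(t)-f(0)}{t}$, while $(n-1)$-convexity of $f$ is equivalent to $(n-1)$-convexity of $f-f(0)$. So it suffices to prove the ``in particular'' statement: if $f\in Q_n([0,\alpha))$ with $f(0)=0$ and $g(t)=\frac{f(t)}{t}$ is $(n-1)$-monotone on $(0,\alpha)$, then $f$ is $(n-1)$-convex.

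Next I would unwind what $f\in Q_n([0,\alpha))$ gives us via Lemma~\ref{lem:Qn}: for every fixed $z\in(0,\alpha)$ the function $x\mapsto[x,z,z]_f$ lies in $P_{n-1}(0,\alpha)$. The idea is to pass to the limit $z\to 0$. Since $f(0)=0$ and $f$ is $C^1$ (actually we may invoke the regularization remarks in the Preliminary section to assume $f$ is as smooth as needed, then remove the smoothness at the end by taking pointwise limits), a direct computation of the second divided difference gives
\begin{align*}
[x,z,z]_f = \frac{[x,z]_f-[z,z]_f}{x-z} = \frac{\frac{f(x)-f(z)}{x-z}-f'(z)}{x-z},
\end{align*}
and I would check that as $z\to 0^+$ this converges (locally uniformly in $x$) to $\frac{f(x)/x - f'(0)}{x}=\frac{g(x)-f'(0)}{x}$, using $f(z)/z\to f'(0)$ and $f'(z)\to f'(0)$. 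Because $P_{n-1}(0,\alpha)$ is closed under pointwise limits, the limit function $x\mapsto\frac{g(x)-f'(0)}{x}$ belongs to $P_{n-1}(0,\alpha)$.

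Now I would combine this with the hypothesis that $g$ itself is $(n-1)$-monotone. The key point is the identity from Proposition~\ref{prp:matrix convex}, $[t_1,t_i,t_j]_f = t_1[t_1,t_i,t_j]_g + [t_i,t_j]_g$: specialising $t_1\to 0$ (legitimate since $f$ extends $C^1$ up to $0$ with $f(0)=0$), the term $t_1[t_1,t_i,t_j]_g\to 0$ and $[t_i,t_j]_f\to$ the appropriate limiting divided difference, which one identifies with $[t_i,t_j]_h$ for $h(x):=\frac{g(x)-f'(0)}{x}\cdot$(something)$+\dots$; more directly, I would write $f(t)=tg(t)$ and compute, for distinct $t_1,\dots,t_{n-1}\in(0,\alpha)$,
\begin{align*}
([t_i,t_j]_g)_{1\le i,j\le n-1}\ \text{is p.s.d. by }\ \cite{L},
\end{align*}
while the second divided difference matrix $([0,t_i,t_j]_f)$ equals $\bigl(\frac{[t_i,t_j]_f}{t_i t_j}\cdot(\text{sign})\bigr)$ type expression that is p.s.d. because it is the Lowner matrix of the $P_{n-1}$ function produced in the previous paragraph (this is exactly the $t_n\to 0$ computation already carried out in the Remark following Proposition~\ref{prp:matrix convex}, which shows such limits land on matrices of the form $(\frac{-f(0)}{t_it_j})$ plus a p.s.d. piece — here with $f(0)=0$). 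Adding the two p.s.d. contributions via the identity $[t_1,t_i,t_j]_f=t_1[t_1,t_i,t_j]_g+[t_i,t_j]_g$ evaluated along a sequence $t_1\to 0$, and using that $t_1[t_1,t_i,t_j]_g\to 0$ while $([0,t_i,t_j]_f)\ge 0$, yields $([t_1,t_i,t_j]_f)\ge 0$ for all choices of $t_1,\dots,t_{n-1}$, hence by Kraus's criterion $(IIa)$ that $f\in K_{n-1}(0,\alpha)$; continuity extends this to $[0,\alpha)$.

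The main obstacle I anticipate is the limiting argument as $z\to 0$ (equivalently $t_1\to 0$): one must justify that the $P_{n-1}$ and monotone-matrix properties survive the passage to the boundary point $0$, that the various divided-difference limits exist and are the ``right'' ones, and that the decomposition into two positive semidefinite pieces is valid in the limit rather than only on the open interval. This is handled by the closedness of $P_{n-1}$ under pointwise limits together with the $C^1$ (or, after regularization, $C^\infty$) control of $f$ near $0$ and the explicit boundary computation already appearing in the Remark after Proposition~\ref{prp:matrix convex}; everything else is bookkeeping with divided differences.
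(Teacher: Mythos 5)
Your reduction to the case $f(0)=0$ is harmless, and the computation that $[x,z,z]_f\to[x,0,0]_f=\frac{g(x)-f'(0)}{x}$ in $P_{n-1}(0,\alpha)$ as $z\to 0^+$ is correct; but as used it extracts from the hypothesis $f\in Q_n([0,\alpha))$ only its boundary value at $z=0$, and the final assembly has a genuine gap. The identity $[t_1,t_i,t_j]_f=t_1[t_1,t_i,t_j]_g+[t_i,t_j]_g$ evaluated along $t_1\to 0$ yields only $([0,t_i,t_j]_f)=([t_i,t_j]_g)\geq 0$: the ``two p.s.d.\ contributions'' you propose to add are in fact the same matrix, and what you recover is just the $(n-1)$-monotonicity of $g$ restated. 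Kraus's criterion requires $([z,\lambda_i,\lambda_j]_f)_{1\leq i,j\leq n-1}\geq 0$ for \emph{every} base point $z\in(0,\alpha)$, and for $z>0$ the term $z\,([z,\lambda_i,\lambda_j]_g)$ does not vanish; its positivity would amount to $(n-1)$-convexity of $g$, which is not among your hypotheses. If the argument as written were valid, it would show that $(n-1)$-monotonicity of $g$ alone forces $(n-1)$-convexity of $f$, contradicting Theorem~\ref{thm:counterexample (3) ->(1)} and Proposition~3.5 of \cite{OJ}.

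The missing idea is to use the $Q_n$ condition at every $z\in[0,\alpha)$, not merely in the limit $z\to 0$, and to \emph{integrate} it. Fix $\lambda_1,\dots,\lambda_{n-1}$ and $\xi\in\C^{n-1}$. Since $x\mapsto[x,z,z]_f$ lies in $P_{n-1}$ for each $z$ and $[x,z,z]_f=\frac{d}{dz}[x,z]_f$ (Lemma~\ref{lem:divided}), the $z$-derivative of the quadratic form $\sum_{i,j}[\lambda_i,\lambda_j,z]_f\,\xi_j\overline{\xi_i}$ equals the quadratic form of the Loewner matrix of $[\,\cdot\,,z,z]_f$ and is therefore nonnegative; integrating from $0$ to $z$ gives
\begin{equation*}
([\lambda_i,\lambda_j,z]_f)\;\geq\;([\lambda_i,\lambda_j,0]_f)\;=\;\bigl([\lambda_i,\lambda_j]_{\frac{f(t)-f(0)}{t}}\bigr)\;\geq\;0,
\end{equation*}
the last inequality being the monotonicity hypothesis. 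This monotone dependence on the base point $z$ is precisely what upgrades positivity at $z=0$ to positivity at all $z$, and it is the step your proposal lacks.
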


\vskip 3mm

\begin{proof}
Since $f \in Q_n([0, \alpha))$, for any $z \in [0, \alpha)$
$h(t) = [t,z, z]$ is (n - 1)-monotone \cite[XV Lemma~4]{Dg}. 
Hence for any $\lambda_1, \dots, \lambda_{n-1} \in [0, \alpha)$
$([\lambda_i, \lambda_j]_h) \geq 0$, that is, for any $\xi = (\xi_1, \xi_2, \dots, \xi_{n-1}) \in \C^{n-1}$
\begin{align*}
(([\lambda_i, \lambda_j]_h)\xi \mid \xi) &\geq 0, \\
\sum_{i,j=1}^{n-1}[\lambda_i, \lambda_j]_h\xi_j\overline{\xi_i} \geq 0.
\end{align*}
Since
\begin{align*}
\sum_{i,j=1}^{n-1}[\lambda_i, \lambda_j]_h\xi_j\overline{\xi_i} 
&= \sum_{i,j=1}^{n-1}\frac{1}{\lambda_i - \lambda_j}(h(\lambda_i) - h(\lambda_j))\xi_j\overline{\xi_i}\\
&= \sum_{i,j=1}^{n-1}\frac{1}{\lambda_i - \lambda_j}
([\lambda_i,z,z]_f - [\lambda_j,z,z]_f\xi_j\overline{\xi_i}\\
&= \sum_{i,j=1}^{n-1}\frac{1}{\lambda_i - \lambda_j}
(\frac{d}{dz}[\lambda_i, z]_f - \frac{d}{dz}[\lambda_j, z]_f)\xi_j\overline{\xi_i}
\ \quad (\hbox{Lemma~\ref{lem:divided}}),\\
\end{align*} 
we have then 
\begin{align*}
0 &\leq \int_0^z(\sum_{i,j=1}^{n-1}\frac{1}{\lambda_i - \lambda_j}
(\frac{d}{dz}[\lambda_i, z]_f - \frac{d}{dz}[\lambda_j, z]_f)\xi_j\overline{\xi_i})dz\\
&= \sum_{i,j=1}^{n-1}\frac{1}{\lambda_i - \lambda_j}
\{([\lambda_i, z]_f - [\lambda_i, 0]_f) - ([\lambda_j, z]_f- [\lambda_j,0]_f)\}\xi_j\overline{\xi_i}\\
&= \sum_{i,j=1}^{n-1}\frac{1}{\lambda_i - \lambda_j}
\{([\lambda_i, z]_f - [\lambda_j, z]_f\}\xi_j\overline{\xi_i}) 
- ([\lambda_i, 0]_f - [\lambda_j, 0]_f\}\xi_j\overline{\xi_i})\}\\
&= \sum_{i,j=1}^{n-1}[\lambda_i,\lambda_j,z]_f\xi_j\overline{\xi_i} - 
\sum_{i,j=1}^{n-1}[\lambda_i,\lambda_j,0]_f\xi_j\overline{\xi_i}.
\end{align*}

Hence, 
$([\lambda_i,\lambda_j,0]_f]) \leq ([\lambda_i,\lambda_j,z]_f)$. 
Note that $([\lambda_i,\lambda_j,0]_f]) = ([\lambda_i, \lambda_j]_{\frac{f(t) - f0)}{t}})$.

Since $\frac{f(t)-f(0)}{t}$ is $(n - 1)$-monotone by the assumption, 
$([\lambda_i, \lambda_j]_{\frac{f(t) - f0)}{t}})$ is positive semidefinite, and 
so is $([\lambda_i,\lambda_j,0]_f])$.
Hence 
$([\lambda_i,\lambda_j,z]_f)$ is positive semidefinite.
Since $z \in [0, \alpha)$ is an arbitrary, 
$([\lambda_i, \lambda_j,\lambda_{n-1}]_f)$ is positive semidefinite, 
that is,  $f$ is $(n - 1)$-convex by \cite{K}.
\end{proof}

\vskip 3mm

\begin{rmk}
From \cite[Lemma~2.3(2)]{OJ} if $f(0) \leq 0$, the $n$-monotonicity of $\frac{f(t) - f(0)}{t}$
implies the $n$-monotonicity of $\frac{f(t)}{t}$. But it is not obvious that the converse 
is true. 

Suppose that $\frac{f(t)}{t}$ is operator monotone. We have then
\begin{align*}
f \ \hbox{is operator convex} &\Rightarrow 
f(t) - f(0) \ \hbox{is operator convex}\\
&\Rightarrow \frac{f(t) - f(0)}{t}
\end{align*}
is operator monotone by \cite{HP}.

Using the same idea we could conclude that 
the $n$-monotonicity and $n$-convexity of $\frac{f(t)}{t}$ implies $(n - 1)$-monotonicity 
of $\frac{f(t) - f(0)}{t}$ by Proposition~\ref{prp:matrix convex} and \cite[Theorem~2.2]{OJ}.
Indeed, if $\frac{f(t)}{t}$ is $n$-monotone and $n$-convex, then $f$ is $n$-convex by 
Proposition~\ref{prp:matrix convex}. Hence $f(t) - f(0)$ is $n$-convex. Therefore, 
$\frac{f(t) - f(0)}{t}$ is $(n - 1)$-monotone by \cite[Theorem~2.2]{OJ}.
\end{rmk}

\vskip 3mm

We obtain the following characterization of $n$-monotonicity of $\frac{f(t) - f(0)}{t}$ 
from the $n$-monotonicity of the first derivative of $f$.

\vskip 3mm

\begin{thm}\label{thm:(3)}
Let $f\colon [0, \alpha) \rightarrow \R$ be a $C^{1}$ function. 
Suppose that $f'$ is $n$-monotone. Then $\frac{f(t) - f(0)}{t}$ is $n$-monotone on 
$(0, \alpha)$.
\end{thm}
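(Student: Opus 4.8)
The plan is to represent $\frac{f(t)-f(0)}{t}$ as an integral average of $f'$ and then transfer positivity of Loewner matrices through that averaging. Concretely, for $t>0$ write
\[
\frac{f(t)-f(0)}{t} = \int_0^1 f'(st)\,ds,
\]
which is the standard integral form of the divided difference $[0,t]_f$. More useful for the Loewner-matrix computation is to fix points $\lambda_1,\dots,\lambda_n\in(0,\alpha)$, set $k(t)=\frac{f(t)-f(0)}{t}$, and compute the divided difference $[\lambda_i,\lambda_j]_k$ directly. Since $k(t)=[0,t]_f$, Lemma~\ref{lem:divided} (applied with the base point $0$ in place of $t$) gives $\frac{d}{dt}[0,t]_f=[0,t,t]_f$, and more generally one expects a clean identity expressing $[\lambda_i,\lambda_j]_k$ in terms of second divided differences of $f$ with a fixed node at $0$, namely $[\lambda_i,\lambda_j]_k = [0,\lambda_i,\lambda_j]_f$. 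This is the algebraic heart of the argument and is the analogue, for the "$k$" transform, of the identity $[t_1,t_i,t_j]_f = t_1[t_1,t_i,t_j]_g+[t_i,t_j]_g$ proved in Proposition~\ref{prp:matrix convex}; indeed one can derive it by the same kind of bookkeeping with $f(t)=f(0)+tk(t)$.

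Granting the identity $([\lambda_i,\lambda_j]_k) = ([0,\lambda_i,\lambda_j]_f)$, the task reduces to showing $([0,\lambda_i,\lambda_j]_f)\ge 0$ for all choices of $\lambda_1,\dots,\lambda_n$. Here is where the hypothesis that $f'$ is $n$-monotone enters. The plan is to use the integral representation of the second divided difference with a repeated/fixed node: by Lemma~\ref{lem:divided} and the fundamental theorem of calculus,
\[
[0,\lambda_i,\lambda_j]_f = \frac{[0,\lambda_i]_f-[0,\lambda_j]_f}{\lambda_i-\lambda_j}
= \frac{1}{\lambda_i-\lambda_j}\int_0^1\bigl(f'(s\lambda_i)\lambda_i/(\cdot) - \cdots\bigr),
\]
but cleaner is to pass through $[0,\lambda,\lambda]_f=\frac{d}{d\lambda}[0,\lambda]_f$ and recognize, exactly as in the proof of Theorem~\ref{thm:Q_n}, that $([0,\lambda_i,\lambda_j]_f)$ is obtained by integrating $([0,\mu,\mu]_f)_{\mu}$ — and $[0,\mu,\mu]_f$ is, up to the elementary manipulations, controlled by the Loewner matrix of $f'$. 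The most efficient route is probably: show $[0,\lambda_i,\lambda_j]_f=\int_0^1 s\,[\,s\lambda_i,\,s\lambda_j\,]_{f'}\,ds$ (a Hermite–Genocchi type formula), so that
\[
([0,\lambda_i,\lambda_j]_f) = \int_0^1 s\,\bigl([s\lambda_i,s\lambda_j]_{f'}\bigr)\,ds,
\]
and then invoke $n$-monotonicity of $f'$: for each fixed $s\in(0,1)$ the points $s\lambda_1,\dots,s\lambda_n$ lie in $(0,\alpha)$, so by Loewner's criterion (Proposition~\ref{Criterion}$(Ia)$, \cite{L}) the matrix $([s\lambda_i,s\lambda_j]_{f'})$ is positive semidefinite; the weighted integral of positive semidefinite matrices is positive semidefinite, hence so is $([0,\lambda_i,\lambda_j]_f)$.

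Finally, applying Kraus's criterion (Proposition~\ref{Criterion}$(IIa)$, \cite{K}) with the fixed node taken to be $0$ — permissible since the convexity criterion allows the distinguished point $t_1$ to be replaced by any fixed $t_k$, and extending to the half-open interval $[0,\alpha)$ by the regularization/limit argument in the Preliminary section — the positive semidefiniteness of $([0,\lambda_i,\lambda_j]_f)$ for all $\lambda_1,\dots,\lambda_n$ is equivalent to $f$ being $n$-convex on $[0,\alpha)$; but here we only need, and conclude, the weaker statement that $k(t)=\frac{f(t)-f(0)}{t}$ satisfies $([\lambda_i,\lambda_j]_k)\ge0$, i.e. $k$ is $n$-monotone. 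The main obstacle I anticipate is establishing the divided-difference identity $[\lambda_i,\lambda_j]_k=[0,\lambda_i,\lambda_j]_f$ (equivalently the Hermite–Genocchi representation) cleanly and handling the endpoint $t\to 0^+$, where $k$ is defined by continuity of $f\in C^1$; the $C^1$ hypothesis is exactly what makes $[0,\lambda]_f$ differentiable in $\lambda$ up to the boundary after regularization, so the limiting arguments should go through as in Theorem~\ref{thm:Q_n}.
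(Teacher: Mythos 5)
Your proposal is correct and follows essentially the same route as the paper: both prove the identity $[\lambda_i,\lambda_j]_{\frac{f(t)-f(0)}{t}}=\int_0^1 s\,[s\lambda_i,s\lambda_j]_{f'}\,ds$ (the paper writes the integrand as $\frac{f'(u\lambda_i)-f'(u\lambda_j)}{\lambda_i-\lambda_j}$ and invokes the $n$-monotonicity of $t\mapsto f'(ut)$, which is the same positivity fact) and conclude by integrating positive semidefinite Loewner matrices. The detour through Kraus's criterion at the end is unnecessary, as you yourself note, but harmless.
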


\vskip 3mm

\begin{proof}
Note that if $g \colon [0, \alpha) \rightarrow \R$ is $n$-monotone, then for any $u \in [0, 1]$
$h(t) = g(ut)$ is $n$-monotone on $[0, \alpha)$.

Since $f'$ is $n$-monotone, for any 
$\lambda_1, \lambda_2, \dots, \lambda_n \in (0, \alpha)$ 
$([\lambda_i, \lambda_j]_{f'})$ is positive semidefinite, that is, 
for any $\xi_1, \xi_2, \dots, \xi_n \in \C$ 
$$
\sum_{i,j=1}^n\frac{1}{\lambda_i - \lambda_j}
\{f'(\lambda_i) - f'(\lambda_j)\}\xi_j\overline{\xi_i} \geq 0.
$$
Hence for any $u \in [0, 1]$
$$
\sum_{i,j=1}^n\frac{1}{\lambda_i - \lambda_j}
\{f'(u\lambda_i) - f'(u\lambda_j)\}\xi_j\overline{\xi_i} \geq 0.
$$
We have then for any $u \in [0, 1]$ 
\begin{align*}
0 &\leq \int_0^1\sum_{i,j=1}^n\frac{1}{\lambda_i - \lambda_j}\{f'(u\lambda_i) - f'(u\lambda_j)\}
\xi_j\overline{\xi_i}du\\
&= \sum_{i,j=1}^n\frac{1}{\lambda_i - \lambda_j}
\int_0^1\left\{\frac{1}{\lambda_i}f(u\lambda_i) - \frac{1}{\lambda_j}f(u\lambda_j)\right\}'du 
\xi_j\overline{\xi_i}\\
&= \sum_{i,j=1}^n\frac{1}{\lambda_i - \lambda_j}
\left\{\frac{f(\lambda_i) - f(0)}{\lambda_i}
- \frac{f(\lambda_j) - f(0)}{\lambda_j}\right\}
 \xi_j\overline{\xi_i}
\end{align*}
Therefore, $[[\lambda_i, \lambda_j]_{\frac{f(t) - f(0)}{t}}]$ 
is positive semidefinite, and $\frac{f(t) - f(0)}{t}$ is $n$-monotone.
\end{proof}

\vskip 3mm

We have, then,  the following relation 
between the class $Q_n([0, \alpha))$ and the class $K_{n-1}([0, \alpha))$.

\vskip 3mm

\begin{cor}
Let $f \in Q_n([0, \alpha))$. 
Suppose that $f'$ is $(n - 1)$-monotone. Then $f$ is $(n - 1)$-convex.
\end{cor}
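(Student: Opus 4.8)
The plan is to deduce this Corollary by simply chaining Theorem~\ref{thm:(3)} and Theorem~\ref{thm:Q_n}, with no new computation required. The key observation is that the \emph{output} of Theorem~\ref{thm:(3)} is precisely the \emph{hypothesis} needed to feed into Theorem~\ref{thm:Q_n}.

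First I would note that $f$ is automatically a real $C^1$ function on $[0,\alpha)$, since membership in $Q_n([0,\alpha))$ requires this by definition; hence Theorem~\ref{thm:(3)} applies. Applying that theorem with $n$ replaced by $n-1$, and using the hypothesis that $f'$ is $(n-1)$-monotone, I conclude that $\frac{f(t)-f(0)}{t}$ is $(n-1)$-monotone on $(0,\alpha)$. Next I would invoke Theorem~\ref{thm:Q_n}: since $f \in Q_n([0,\alpha))$ and $\frac{f(t)-f(0)}{t}$ has just been shown to be $(n-1)$-monotone on $(0,\alpha)$, that theorem yields that $f$ is $(n-1)$-convex, which is the assertion. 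When $f(0)=0$ this of course specializes to the statement that $(n-1)$-monotonicity of $f(t)/t$ implies $(n-1)$-convexity of $f$, already contained in Theorem~\ref{thm:Q_n}.

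There is essentially no genuine obstacle here; the whole analytic content has been absorbed into the two theorems being cited (an integration argument using the scaling trick $u \mapsto f'(ut)$ for Theorem~\ref{thm:(3)}, and a second integration together with Donoghue's characterization of $Q_n$ via $[x,z,z]_f \in P_{n-1}$ for Theorem~\ref{thm:Q_n}). The only point demanding a moment's attention is the bookkeeping of the monotonicity order: Theorem~\ref{thm:(3)} \emph{preserves} the order (if $f'$ is $m$-monotone then $\frac{f(t)-f(0)}{t}$ is $m$-monotone), whereas Theorem~\ref{thm:Q_n} drops exactly one level relative to the $Q_n$-hypothesis; choosing $m = n-1$ makes the two statements dovetail. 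One could alternatively try to argue directly through the divided-difference identity of Proposition~\ref{prp:matrix convex}, but the two-step route above is the cleanest and I would present it that way.
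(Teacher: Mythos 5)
Your proposal is correct and follows exactly the paper's own argument: apply Theorem~\ref{thm:(3)} to get $(n-1)$-monotonicity of $\frac{f(t)-f(0)}{t}$, then feed this into Theorem~\ref{thm:Q_n} to conclude $(n-1)$-convexity of $f$. The extra remarks on the $C^1$ hypothesis and the order bookkeeping are accurate but add nothing beyond what the paper does.
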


\vskip 3mm

\begin{proof}
From Theorem~\ref{thm:(3)} $\frac{f(t) - f(0)}{t}$ is $(n -1)$-monotone 
on $(0, \alpha)$. Hence $f$ is $(n - 1)$-convex by Theorem~\ref{thm:Q_n}.
\end{proof}

\vskip 3mm

\begin{rmk}
It is not true that $Q_n(I) \subset K_{n-1}(I)$  for an interval in 
$[0, \infty)$.
Indeed, 
If $0 < \alpha < 1$, $t^\alpha$ is $n$-monotone, but not
$2$-convex. Hence if $n \geq 3$ and $\beta > 0$, then 
$t^\alpha \in Q_n([0, \beta))$, but $t^\alpha \notin K_{n-1}([0, \beta))$.
\end{rmk}

\vskip 3mm

As pointed out in Proposition~3.5 in \cite{OJ}
the implication from $(3)$ to $(1)$ is not true even if $n = 1$. 
We then have another observation as in Theorem~\ref{thm:counterexample (1)->(3)} when $n = 2$
using Maple.

\vskip 3mm

\begin{thm}\label{thm:counterexample (3) ->(1)}
For any $\alpha > 0$ 
there is a $2$-monotone function $g$ on $(0, \alpha)$, 
but $f(t) = tg(t)$ is not $2$-convex on $[0, \alpha)$.
\end{thm}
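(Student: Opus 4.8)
The plan is to mirror the construction used in Theorem~\ref{thm:counterexample (1)->(3)}, but with the roles of $f$ and $g$ interchanged, and then to globalize from a small interval $[0,\beta)$ to an arbitrary $[0,\alpha)$ by the rescaling device of Lemma~\ref{lem:parallelism movement}. Concretely, I would start by choosing an explicit candidate $g$ whose $2$-monotonicity is easy to certify but whose product $f(t)=tg(t)$ fails to be $2$-convex near $0$. A natural choice is a function built from $-\log(t+1)$ together with a small polynomial correction, for instance $g(t) = 1 + c_1 t + c_2 t^2 - \tfrac{\log(t+1)}{t}$ or, cleaner, a function of the form $g(t) = a + bt - t\log(t+1)/t^{2}$; the point is to pick $g$ so that $g'$, $g^{(3)}$ and $\det(M_2(g,t))$ are all manifestly nonnegative on some $(0,\beta)$, which by Proposition~\ref{Criterion}$(Ib)$ (the Dobsch--Donoghue criterion for $n=2$) certifies $g \in P_2(0,\beta)$.

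The second step is to compute $f(t) = t g(t)$ and examine $K_2(f,t) = \left(\tfrac{f^{(i+j)}(t)}{(i+j)!}\right)$. Using $f^{(k)}(t) = t g^{(k)}(t) + k g^{(k-1)}(t)$ (the identity already recorded in the proof of Proposition~\ref{prp:2-convex}), one writes out the $2\times 2$ matrix $K_2(f,t)$ in terms of $g$ and its derivatives, and then evaluates $\det(K_2(f,t))$, or simply the $(2,2)$-entry $\tfrac{1}{24}f^{(4)}(t)$, at a point $t_0$ very close to $0$. The mechanism driving the counterexample is the same asymptotic phenomenon exploited before: because $t > \log(t+1)$ for $t>0$, the logarithmic terms in the expansion of $f^{(4)}$ (equivalently of $\det K_2(f,t)$) dominate with the wrong sign for small $t$, so one can produce an explicit $t_0 \in (0,\beta)$ with $\det(K_2(f,t_0)) < 0$ (or $f^{(4)}(t_0)<0$), hence $f \notin K_2([0,\beta))$ by the $n=2$ criterion of Proposition~\ref{Criterion}$(IIb)$. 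As in the earlier theorem, this negativity can be exhibited numerically (``for example $\det(K_2(f,t_0)) < 0$ at $t_0 = 10^{-9}$ using Maple'').

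The final step is the globalization. Having found $g \in P_2((0,\beta))$ with $f(t)=tg(t) \notin K_2([0,\beta))$ for one particular $\beta>0$, I let $h\colon [0,\alpha) \to [0,\beta)$ be $h(t) = \tfrac{\beta}{\alpha} t$ and set $G(t) = g(h(t))$, $F(t) = t G(t) = \tfrac{\alpha}{\beta}\,(f\circ h)(t)$. By the remark at the start of the proof of Theorem~\ref{thm:(3)} (or directly by Lemma~\ref{lem:parallelism movement}$(2)$ applied contrapositively), $G$ is $2$-monotone on $(0,\alpha)$; and if $F$ were $2$-convex on $[0,\alpha)$, then since $F(t)/t = G(t)$ and $f/h = $ scalar multiple of $F\circ h^{-1}$, Lemma~\ref{lem:parallelism movement}$(1)$ would force $f$ (up to the harmless positive scalar $\tfrac{\alpha}{\beta}$, which does not affect $2$-convexity) to be $2$-convex on $[0,\beta)$, a contradiction. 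Hence $F$ is the desired $2$-monotone function on $(0,\alpha)$ with $tG(t)$ not $2$-convex on $[0,\alpha)$.

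The main obstacle is purely the bookkeeping of step two: one must choose the polynomial correction in $g$ with small enough coefficients that $g', g^{(3)}$ and $\det M_2(g,t)$ stay nonnegative on a genuine interval $(0,\beta)$ (so that $g\in P_2$ is beyond doubt), yet arrange that the product $f=tg$ has $\det K_2(f,t)$ changing sign arbitrarily close to $0$. Balancing these two requirements — and then verifying the inequalities, partly by Maclaurin expansion of $\log(t+1)$ and partly by a numerical check at a concrete $t_0$, exactly as in Theorem~\ref{thm:counterexample (1)->(3)} — is the only delicate part; the rescaling argument of step three is then routine.
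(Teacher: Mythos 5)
Your overall architecture matches the paper's: produce one explicit example on a small interval $[0,\beta)$ and then transport it to $[0,\alpha)$ by the rescaling of Lemma~\ref{lem:parallelism movement}; that last step is fine. The gap is in step two, which is the entire content of the theorem: you never actually exhibit a $g$ and verify the two required sign conditions, and the mechanism you propose for the failure of $2$-convexity is very likely unworkable. From $f^{(k)}(t)=tg^{(k)}(t)+kg^{(k-1)}(t)$ one gets the identity $K_2(f;t)=M_2(g;t)+t\,K_2(g;t)$. If $g$ is $2$-monotone on $(0,\beta)$ then $M_2(g;t)\geq 0$ there, so for your candidates (which are analytic at $0$, hence have bounded $K_2(g;t)$) the matrix $K_2(f;t)$ is an $O(t)$ perturbation of a positive semidefinite matrix as $t\to 0$. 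Thus $\det K_2(f;t_0)<0$ at $t_0$ ``arbitrarily close to $0$'' can only happen if $\det M_2(g;t)\to 0$ as $t\to 0$, a knife-edge degeneracy; and for the specific family $g(t)=1+c_1t+c_2t^2-\log(t+1)/t$ the requirement $\det M_2(g;0)=0$ together with $\frac{d}{dt}\det M_2(g;t)\big|_{t=0}\geq 0$ (needed to keep $g$ $2$-monotone for small $t>0$) is incompatible with making the first-order term of $\det K_2(f;t)$ negative. So the ``logarithmic terms dominate near $0$'' heuristic, which did drive Theorem~\ref{thm:counterexample (1)->(3)}, does not transfer to this direction.

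The paper avoids this entirely by placing the failure of $2$-convexity away from the origin: it takes the polynomial $f(t)=t+\frac{1}{2}t^2+\frac{1}{3}t^3+\frac{1}{4}t^4+\frac{1}{5}t^5$, checks that $\det K_2(f;t)$ becomes negative around $t\approx 0.15$ while $\det M_2(g;t)$ for $g(t)=f(t)/t$ stays nonnegative up to $t=0.17$, and then works on $[0,0.17]$ before rescaling. If you want to salvage your plan, look for the loss of $2$-convexity near the right end of the interval of $2$-monotonicity of $g$ (where, for instance, the $(2,2)$-entry $g'''(t)/6+tg^{(4)}(t)/24$ of $K_2(f;t)$ can turn negative while $g'''\geq 0$ still holds), not near $t=0$; and in any case you must actually carry out the verification for one concrete $g$ --- the existence statement is not established until that computation is done.
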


\begin{proof}
Let $f$ be a $2$-convex function defined by 
$f(t) = t + \frac{1}{2}t^2 + \frac{1}{3}t^3 + \frac{1}{4}t^4 + \frac{1}{5}t^5$.
Then we know that $K_2(f,t) = \frac{1}{72} + \frac{1}{12}t - \frac{23}{24}t^2 - 2t^3 - 2t^4$ 
is positive definite on $[0, 0.14)$, but 
negative definite on $[0.15, 1)$.

On the contrary, $g(t) = \frac{f(t)}{t} = 1 + \frac{1}{2}t + \frac{1}{3}t^2 
+ \frac{1}{4}t^4 + \frac{1}{5}t^4$ is $2$-monotone on $[0, 0.17]$ from which 
$M_2(g, t) = \frac{1}{72} + \frac{1}{15}t - \frac{77}{120}t^2 - t^3 
- \frac{4}{5}t^4$ is positive definite on $[0, 0.17]$.

Hence $\frac{f(t)}{t} \in M_2([0, 0.17])$, but $f \notin K_2([0, 0.17])$.

Let $\alpha > 0$ and $\beta = 0.17$, and define $h \colon [0, \alpha) \rightarrow [0, \beta)$
by $h(t) = \frac{\beta}{\alpha}t$. Define $G(t) = \frac{(f\circ h)(t)}{t}$. 
Then $G \in M_2([0, \alpha))$ by Lemma~\ref{lem:parallelism movement}$(2)$. 
As in the proof of Theorem~\ref{thm:Q_n}, however, $tG(t) \not\in K_2([0, \alpha))$.
Indeed, suppose that $tG(t) \in K_2([0, \alpha))$. Then $(f \circ h) \in K_2([0, \alpha))$. 
By Lemma~\ref{lem:parallelism movement}$(1)$ we know that $f \in K_2([0, \beta))$, and a 
contradiction. Hence  $tG(t) \not\in K_2([0, \alpha))$.
\end{proof}

\vskip 3mm

Before closing this note we summarize several observations  and a 
problem between condition $(1)$ and condition $(3)$, which are presented at 
the first part in this section.

\vskip 3mm

\begin{thm}\label{thm:summarize}
Let $0 < \alpha \leq \infty$ and $f$ be a real valued function in $[0, \alpha)$ 
with $f(0) \leq 0$. Let $g(t) = \frac{f(t)}{t}$.
Suppose that $f$ is a $C^2$-function. 
\begin{enumerate}
\item[(i)]
If $g$ is $n$-monotone and $n$-convex, then $f$ is $n$-convex.
\item[(ii)]
If $f$ is $2$-convex, 
then $g \in Q_2(0, \alpha)$, but it does not necessarily imply that $g$
is  $2$-monotone.
\item[(iii)]
If $f$ in $Q_{n+1}([0, \alpha))$ with $f(0) = 0$ and $g$ is $n$-monotone, 
then $f$ is $n$-convex. 
In particular, if $f$ is $(n + 1)$-monotone, 
then the implication from $(3)_n$ to $(1)_n$ holds.
\end{enumerate}
\end{thm}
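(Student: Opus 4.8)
The plan is to prove Theorem~\ref{thm:summarize} by assembling the three parts from results already established in the preceding sections, since each statement is essentially a repackaging of a named proposition or theorem above. The only preliminary remark is that although most of the cited results are phrased for $C^2$ or $C^1$ functions, the regularization procedure recalled in Section~2 lets us reduce to the smooth case: a pointwise limit of $n$-convex (resp. $n$-monotone) functions is again $n$-convex (resp. $n$-monotone), so it suffices to prove each implication under the standing hypothesis that $f$ is $C^2$, which is assumed anyway.

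For part (i), I would simply invoke Proposition~\ref{prp:matrix convex}. Writing $f(t) = tg(t)$ with $g = f(t)/t$ a $C^2$-function on $(0,\alpha)$, that proposition gives the divided-difference identity $[t_1,t_i,t_j]_f = t_1[t_1,t_i,t_j]_g + [t_i,t_j]_g$; if $g$ is $n$-monotone then $([t_i,t_j]_g)$ is positive semidefinite by Loewner's criterion (Criterion~\ref{Criterion}, $(Ia)$), and if $g$ is $n$-convex then $([t_1,t_i,t_j]_g)$ is positive semidefinite by Kraus's criterion ($(IIa)$); since $t_1 > 0$, the matrix $([t_1,t_i,t_j]_f)$ is a sum of positive semidefinite matrices, hence positive semidefinite, so $f$ is $n$-convex. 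One small point to address is the behaviour at the endpoint $0$: the hypothesis $f(0)\le 0$ together with $C^2$-ness at $0$ ensures $g$ extends continuously, and we conclude $n$-convexity on $[0,\alpha)$ by continuity of the divided differences in all their arguments.

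For part (ii), the affirmative half is exactly Proposition~\ref{prp:2-convex}(2): if $f$ is $2$-convex with $f(0)\le 0$ then $g\in Q_2(0,\alpha)$. The negative half — that $2$-convexity of $f$ does not force $g$ to be $2$-monotone — is witnessed by the explicit example in Theorem~\ref{thm:counterexample (1)->(3)}, where $f(t) = t + \tfrac{1}{2}t^2 + \tfrac{1}{3}t^3 - \log(t+1)$ is shown (via the sign of $\det K_2(f,t)$ near $0$) to be $2$-convex on a small interval, while $\det M_2(g,t)$ is negative at points arbitrarily close to $0$; the rescaling Lemma~\ref{lem:parallelism movement} then transports the example to an arbitrary $[0,\alpha)$. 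So part (ii) is just a citation of these two results.

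For part (iii), I would combine Theorem~\ref{thm:Q_n} with a relabelling of indices. Theorem~\ref{thm:Q_n}, applied with $n$ replaced by $n+1$, says: if $f\in Q_{n+1}([0,\alpha))$ and $\frac{f(t)-f(0)}{t}$ is $n$-monotone on $(0,\alpha)$, then $f$ is $n$-convex; under $f(0)=0$ the hypothesis reads simply ``$g = f(t)/t$ is $n$-monotone'', giving the first assertion directly. For the ``in particular'' clause, recall $P_{n+1}(I)\subset Q_{n+1}(I)$ from Section~3, so an $(n+1)$-monotone $f$ lies in $Q_{n+1}([0,\alpha))$; then the preceding sentence of the theorem shows $(3)_n \Rightarrow (1)_n$ under this extra hypothesis, noting that $(n+1)$-monotonicity of $f$ forces $f(0)\le f(t)$ for $t\ge 0$ hence is compatible with—indeed we simply also assume—$f(0)=0$ (or $f(0)\le 0$, after which $g$ $n$-monotone plus Proposition~\ref{prp:matrix convex}-style bookkeeping still yields $f$ $n$-convex with $f(0)\le 0$, which is $(1)_n$). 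I do not expect a genuine obstacle here; the only care needed is the index bookkeeping between the ``$Q_n$ version'' of Theorem~\ref{thm:Q_n} and the ``$(3)_n\to(1)_n$'' formulation, and the remark about why $f(0)=0$ versus $f(0)\le 0$ causes no trouble.
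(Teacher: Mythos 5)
Your proposal is correct and follows essentially the same route as the paper, which offers no separate proof of Theorem~\ref{thm:summarize} precisely because it is a summary: (i) is Proposition~\ref{prp:matrix convex}, (ii) is Proposition~\ref{prp:2-convex}(2) together with the counterexample of Theorem~\ref{thm:counterexample (1)->(3)}, and (iii) is Theorem~\ref{thm:Q_n} with $n$ replaced by $n+1$ plus the inclusion $P_{n+1}\subset Q_{n+1}$. The only caution is your parenthetical in (iii) suggesting the case $f(0)\le 0$ could be handled the same way: the paper's own remark after Theorem~\ref{thm:Q_n} points out that $n$-monotonicity of $f(t)/t$ is not known to imply that of $(f(t)-f(0))/t$, which is exactly why (iii) is stated with $f(0)=0$; the main line of your argument does not rely on that aside, so no gap results.
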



\end{document}